\newtheorem{theorem}{Theorem}%[section]
\newtheorem{lemma}{Lemma}
\newtheorem{corollary}{Corollary}
\newtheorem{proposition}{Proposition}
\newtheorem{definition}{Definition}
\theoremstyle{remark}
\newtheorem{example}{Example}
\newtheorem{hypothesis}{Hypothesis}
 \def\beqlb{\begin{eqnarray}}\def\eeqlb{\end{eqnarray}}
 \def\beqnn{\begin{eqnarray*}}\def\eeqnn{\end{eqnarray*}}
\def\N{\mathbb{N}}
\def\Z{\mathbb{Z}}
\def\R{\mathbb{R}}
\def\E{\mathbb{E}}
\renewcommand{\epsilon}{\varepsilon}
\definecolor{mygray}{gray}{0.9}
\definecolor{deeppink}{RGB}{255,20,147}
\definecolor{mygreen}{rgb}{0.05, 0.576, 0.03}
\definecolor{myred}{rgb}{0.768, 0.09, 0.09}
\long\def\symbolfootnote[#1]#2{\begingroup
\def\thefootnote{\fnsymbol{footnote}}\footnote[#1]{#2}\endgroup}
\newcommand{\ddr}{\mathrm{d}}
\begin{document}

\title{\bf  On a Markov chain related to the individual lengths in the recursive construction of Kingman's coalescent}
%A Markov chain in a non-Markov construction \\ 
%of Kingman's coalescent}
\author{Linglong Yuan
\thanks{Department of Mathematical Sciences,
University of Liverpool, Liverpool, UK\\
 \hspace*{1.5em} Email: \href{linglong.yuan@liverpool.ac.uk}{linglong.yuan@liverpool.ac.uk}}}

\maketitle
\abstract{Kingman's coalescent is a widely used process to model sample genealogies in population genetics. Recently there have been studies on the inference of quantities related to the genealogy of additional individuals given a known sample. This paper explores the recursive (or sequential) construction which is a natural way of enlarging the sample size by adding individuals one after another to the sample genealogy via individual lineages to construct the Kingman's coalescent. Although the process of successively added lineage lengths is not Markovian,  we show that it contains a Markov chain which records the information of the successive largest lineage lengths and we prove a limit theorem for this Markov chain. }

	\vspace{8pt} \noindent {\textit{Key words:} Kingman's coalescent, recursive construction, sequential construction, lineage length, provisional external branch length, convergence of non-Markov processes}
	\\
 
	\noindent\textit{MSC (2020): } primary 60J90, 60B10, 60B12; secondary 37A30, 60J10.

\section{Introduction}

\section{Introduction}

The coalescent theory was introduced by Kingman (\citealp{kingman1982coalescent}) and has since then become a standard framework to model sample genealogies.  The Kingman's $n$-coalescent with $n\geq 1$, denoted by $\Pi^n=(\Pi^{n}(t))_{t\geq 0},$ is a continuous-time Markov process with state space $\mathcal P(n)$, the set of partitions of $[n]:=\{1,2,\cdots,n\}$. It starts at time $0$ with the partition of singletons $\{\{1\},\{2\},\cdots, \{n\}\}$, and at any time, any two blocks merge into one at rate $1$ independently.  Eventually, the coalescent reaches the final state $\{\{1,2,\cdots,n\}\}$, called the most recent common ancestor (MRCA), and stays there forever. We set by convention that $\Pi^1(t)=\{\{1\}\}$ for any $t\geq 0.$ 

We introduce further notations: if $\pi$ is a partition of a set of integers, let $|\pi|$ be the number of blocks in $\pi;$ let $\Z_+=\{1,2,\cdots\}$ and $\mathcal P(\infty)$ be the set of partitions of $\Z_+$.

The Kingman's $n$-coalescents are consistent: for any $m>n\geq 1$, if we consider the natural restriction of $\Pi^{m}$ to the partitions in $\mathcal P(n)$, then the resulting new process %, that we denote by $\Pi^{m,n}$, 
has the same law as $\Pi^{n}$, thus independent of $m$. The consistency property will allow to construct the Kingman's (infinite) coalescent $\Pi^\infty=(\Pi^{\infty}(t))_{t\geq 0}$ which starts at time $0$ with the set of singletons $\{\{1\},\{2\},\cdots\}\in \mathcal P(\infty)$, and the restriction of $\Pi^\infty$ to $ 
\mathcal P(n)$ has the same law as $\Pi^n$ for all $n\geq 1$. This $\Pi^\infty$ can be constructed using Kolmogorov's extension theorem, see \cite[Proposition 2.1]{berestycki2009recent}. 

The process $\Pi^\infty$ can also be constructed naturally by first giving $\Pi^1$, and conditionally on $\Pi^n$ for $n\geq 1$, we use consistency property to construct $\Pi^{n+1}$ by connecting individual $n+1$ to $\Pi^{n}$ at a random time, see Figure \ref{fig:external}. More precisely, given $\Pi^{n}$: %, individual $n+1$ will be connected to $\Pi^n$ in the following way: 
at any time $t$, if individual $n+1$ has not been connected to $\Pi^n$, then the rate for it to be connected is equal to  $|\Pi^n(t)|$; if the connection takes place at time $t$, then the individual will coalesce with a block chosen uniformly from $\Pi^n(t)$. We denote the connection time of individual $n+1$ by $L_{n+1}$. We shall also call it the \textit{lineage length} of individual n+1. The construction just explained is the so-called {\it recursive (or sequential) construction} of Kingman’s coalescent; see \cite[Section 5]{dhersin2013length} for an introduction of recursive construction of $\Lambda$-coalescents for which Kingman's coalescent is a special case, see also \cite[Section 3.4]{crane2016ubiquitous}. 

We are interested in the asymptotic behaviour of the process of lineage lengths (or connection times) of individuals in this construction. This is partly motivated by a recent work (\citealp{favaro2019bayesian}) which studied the inference of quantities related to the genealogy of additional individuals given a known sample. The recursive construction provides a natural way of enlarging sample sizes, and could be a useful angle to investigate the genealogical relationship between known and new additional individuals. The idea of adding up small parts to construct the whole process can also be found in the measure division construction of $\Lambda$-coalescents (\citealp{yuanlambda}), see also \cite[Section 3]{berestycki2008small}.  %Note that there is a process called {\it the sequential Markov coalescent} \cite{mcvean2005approximating} which is different from the recursive construction here. 

%Back to the recursive construction, b
Based on the definition of recursive construction, for any $n\geq 1$, we have %if we denote the lineage length of individual $n+1$ by $L_{n+1}$ for $n\geq 1$, then 
\begin{equation}\label{eqn:lawln}\mathbb P(L_{n+1}\geq t\,|\,\Pi^n)=\exp\left(-\int_0^t|\Pi^n(s)|ds\right),\quad \forall t\geq 0.\end{equation}
Since $\Pi^1(t)=\{1\}$ for any $t\geq 0,$ we set $L_1=\infty$ by convention.  Note that in this construction, $L_{n+1}$ is the external branch length of  individual $n+1$ in $\Pi^{n+1}$. However as more individuals are added, the external branch length of individual $n+1$ will be shorter and shorter, see Figure \ref{fig:external}. From this point of view, we call $L_{n}$ the {\it provisional external branch length} (although for brevity we will still use \textit{lineage length} later) of individual $n$, for $n\geq 1.$ Here the case $n=1$ is included for completeness. 
\begin{figure}[!h]
\hspace*{1.3in}
\includegraphics[scale=0.6]{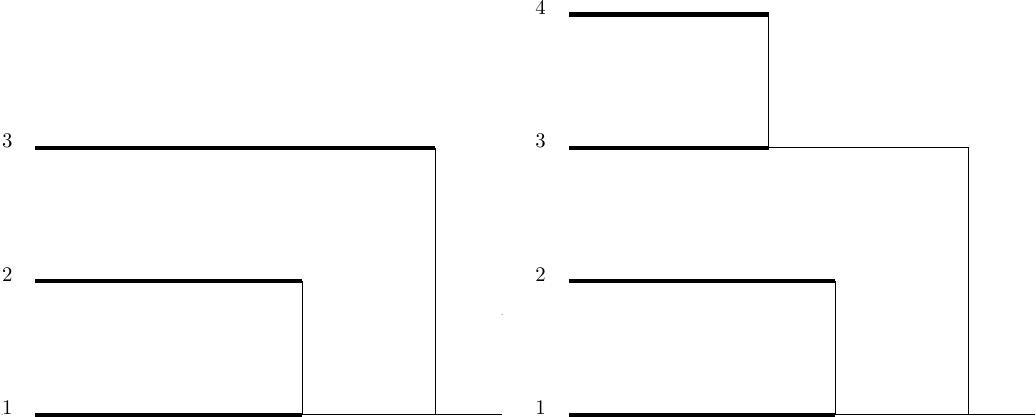}
\caption{On the left, the recursive construction is up to individual 3, and on the right is up to individual 4. The bold segments are the external branches. On the left, individual 3 is just added and thus $L_3$ is the external branch length of individual 3. On the right, since individual 4 coalesced with individual 3, the external branch length of individual 3 becomes $L_4$ which is smaller than $L_3$. }\label{fig:external}
\end{figure}
\begin{definition}
In the recursive construction of Kingman's coalescent, we call the process $$(L_n):=(L_n)_{n\geq 1}$$ 
{\it provisional external branch length sequence (PEBLS) of Kingman's coalescent}.
\end{definition}
Given $(L_n)$, we can construct $\Pi^\infty$, following the description of recursive construction: 
\begin{itemize}
\item for any $n\geq 2$, choose uniformly an element from $\{i: L_i\geq L_n, 1\leq i\leq n-1\}$, say $j$; 
\item then 
merge individual $n$ with the cluster containing $j$ at time $L_n.$ 
\end{itemize}
The resulting process has the same law as $\Pi^\infty.$

The recursive construction allows to build $\Pi^\infty$ by sample size expansion. We can view integer $n$ as individual $n$ and also as time $n$.  % main purpose of this paper is to study %$(L_n)$ as $n$ increases. %For convenience, we refer to $n$ as the $n$-th individual and also as the time $n$. 
%That is, we consider 
%the evolution of $L_n$ as $n$ increases (i.e., evolution by size expansion). 
This is the main difference with the usual Kingman's coalescent which fixes the sample size (finite or infinite) first and evolves in (real) time. A direct consequence of the size-expansion point of view  is that $(L_n)$ is not a Markov chain, since to determine the law of $L_n$, we need to know  not only $L_{n-1}$, but all $L_i$ for $2\leq i\leq n-1$, see \eqref{eqn:lawln}.  The main result of this paper is that, surprisingly, there is a Markov chain out of $(L_n)$, see Theorem \ref{thm:markov} in the next section. 

\section{Main results}\label{sec:main}

Let $M_j$ be the $j$-th largest  length in $(L_n)_{n\geq 2}$. Note that $M_1<\infty$ almost surely as Kingman's coalescent comes down from infinity (i.e.\ $|\Pi^\infty(t)|<\infty$ for all $t>0$, almost surely).  Let $A_1$ be the arrival time of $M_1: L_{A_1}=M_1$. Let $R_1=1.$ For any $i\geq 2,$ define $A_i, R_i$ by 
\begin{equation}\label{eqn:wr}A_i=\arg\max_{j}\{L_j: j>A_{i-1}\},\quad M_{R_i}=L_{A_i}.\end{equation}
In other words, 
\begin{itemize}
\item the largest length in $(L_n)_{n\geq 2}$ has index $A_1$; 
\item for any $i\geq 2$,  the largest among $\{L_n: n>A_{i-1}\}$ has index $A_{i}$; \item moreover, the length with index $A_i$ is the $R_i$-th largest among $(L_n)_{n\geq 2}$. 
\end{itemize}
Thus, $(A)=(A_i)_{i\geq 1}$ records the arrival times of successive largest lengths in $(L_n)_{n\geq 2}$, and $(R)=(R_i)_{i\geq 1}$ records the rankings of these lengths in $(L_n)_{n\geq 2}$. By definition we have 
\begin{equation}\label{eqn:ar>}1< A_i< A_{i+1}, 1\leq R_i< R_{i+1},\quad \text{ for any }i\geq 1.\end{equation}  
It turns out that $(R,A)$ is a Markov chain despite that $(L_n)$ is non-Markov. 
\begin{theorem}\label{thm:markov}
The process $(R,A)$ is a Markov chain such that 
\begin{itemize}
\item[(1)] $A_i-R_i\geq 1$, for any $i\geq 1$;  
\item[(2)] $\mathbb P(A_1=n)=\frac{2}{n(n+1)}$ for any $n\geq 2$, and $R_1=1$;
\item[(3)] For any $i\geq 1$ we have %$1\leq x\leq A_i-R_i$,  we have  
$R_i+1\leq R_{i+1} \leq A_i$ and for any $1\leq x\leq A_i-R_i$,
%The tail probabilities of the above two laws are given as follows:
\begin{equation}\label{eqn:rtail}\mathbb P(R_{i+1}\geq R_i+x\,|\, R_i,A_i)%=\frac{A_i+R_i+x}{A_i+R_i+1}\frac{{2A_i\choose A_i-R_i-x}}{{2A_i\choose A_i-R_i-1}}
=\frac{{2A_i-1\choose A_i-R_i-x}}{{2A_i-1\choose A_i-R_i-1}},\end{equation}
and for any $y\geq 1$, %  and 
\begin{equation}\label{eqn:atail}\mathbb P(A_{i+1}\geq A_i+y\,|\,R_i,A_i,R_{i+1})=\frac{A_i+R_{i+1}}{A_i+R_{i+1}+y-1}.\end{equation}
%%Consequently, 
%\begin{align}\label{eqn:mc}
%&\P(R_{i+1}=R_i+x, A_{i+1}=A_i+y\,|\,R_i,A_i)\nonumber\\
%=&\left(\prod_{k=1}^{x-1}\frac{A_i-R_i-k}{A_i+R_i+k}\right)\frac{2R_i+2x}{A_i+R_i+x}\prod_{k=1}^{y-1}\left(\frac{A_i+R_i+x+k-1}{A_i+R_i+x+k}\right)\frac{1}{A_i+R_1+x+y}\nonumber\\
%=&\frac{2R_i+2x}{(A_i+R_i+x+y-1)(A_i+R_i+x+y)}{A_i-R_i-1\choose A_i-R_i-x}\Big /{A_i+R_i+x-1\choose A_i+R_i}&
%\end{align}
\end{itemize}
\end{theorem}

To analyse the asymptotic behaviour of $(R,A)$, we study the convergence of the processes below $$\mathcal W^{(n)}:=\left(\left(\frac{R^2_{n+1+i}}{A_{n+1+i}}, \,\,\frac{A_{n+i}}{A_{n+1+i}}\right)\right)_{i\geq 1},\quad n\geq 0,$$ 
as $n\to\infty.$ Note that the above processes are not Markov for any $n$.  
%Let $Z$ be a positive random variable such that \begin{equation}\label{eqn:z}\P(Z\geq t)=e^{-t^2},\quad \forall t\geq 0.\end{equation} 
 % and any random variable $X\in\R_+:=[0,\infty)$,  let $X_{\geq t}\in[t,\infty)$ denote a new random variable with law given by 
%$$\mathcal L(X_{\geq t})=\mathcal L(X\, |\, X\geq t).$$
%A simple property is 
%\[(X_{\geq t})^2\stackrel{d}{=}(X^2)_{\geq t^2}.\]
To state the convergence result, we need some more notations. 
Let $\text{Exp(1)}$ be the exponential distribution with parameter $1$ and $\text{Uni}(0,1)$ the uniform distribution on $(0,1)$. Let $(\eta_1,\eta_2,\cdots)$ be i.i.d.\ random variables with common law $\text{Uni}(0,1)$. Let $\xi_0\geq 0$ be a random variable independent of $(\eta_1,\eta_2,\cdots)$.  Inductively, define 
\begin{equation}\label{eqn:xi}\xi_i:=(\xi_{i-1}+X_i)\eta_i, \quad i\geq 1\end{equation}
%where $Z^{(i)}$ is a random variable such that  $$Z^{(i)}=,$$ 
where $X_i\sim \text{Exp}(1)$ and is independent of $\{\xi_0, \xi_1,\cdots, \xi_{i-1}, \eta_1,\eta_2,\cdots\}$. Denote $\mathcal W:=((\xi_i, \eta_i))_{i\geq 1}$. We use $\Longrightarrow$ to denote the weak convergence in finite dimensional distributions. Then we have the following asymptotic result for $\mathcal W^{(n)}$.

\begin{theorem}\label{cor:limit}
If $\xi_0\sim \text{Exp}(1)$, then $\mathcal W^{(n)}\stackrel{n\to\infty}{\Longrightarrow} \mathcal W$ which is a stationary Markov chain with 
\begin{itemize}
\item $\xi_i\sim \text{Exp}(1)$ for any $i\geq 1$, 
\item and the density function of $(\xi_1,\eta_1)$ given by
\begin{equation}\label{density}
\frac{\ddr ^2}{\ddr s\ddr t}\mathbb P(\xi_1\leq s, \eta_1\leq t)=\frac{s}{t^2}e^{-\frac{s}{t}}, \quad s\geq 0, 0<t<1. 
\end{equation} 
\end{itemize}
%a stationary Markov chain $\mathcal W=((\xi_i, \eta_i))_{i\geq 1}$ such that  for any $i\geq 1,$ 
%$$\mathcal L(\eta_{i+1} \, |\, (\xi_i, \eta_i))=\text{Exp}(1). $$
%Moreover, conditioned on $((\xi_i, \eta_i), \eta_{i+1})$, we have  \begin{equation}\label{eqn:xii+1}\xi_{i+1}=Z_{\xi_i}^{(i+1)}e^{-\eta_{i+1}}, \end{equation} 
%where $Z_{\xi_i}^{(i+1)}$ is a random variable following (conditionally) the same distribution as $Z_{\xi_i}$ with $Z\sim \text{Exp}(1)$. Unconditionally, $\xi_i\sim \text{Exp}(1), \forall i\geq 1.$
\end{theorem}

The remaining part of the paper will be devoted to proofs. We will prove Theorem \ref{thm:markov} in Section \ref{sec:1}
 and Theorem \ref{cor:limit} in Section \ref{sec:2}.

\section{Proof of Theorem \ref{thm:markov}}\label{sec:1}

\subsection{Aldous's construction}
Aldous \cite[Section 4.2]{MR1673235} introduced a construction of Kingman's coalescent, see also \cite[Theorem 2.2]{berestycki2009recent}. The idea can be dated back to the seminal paper of Kingman (\citealp{kingman1982coalescent}). This construction will play a key role in the proof of Theorem \ref{thm:markov}. 

%\vspace{3mm}

\begin{figure}[!h]
 \vspace{-20pt}
\hspace*{0.85 in}
\includegraphics[scale=1.1]{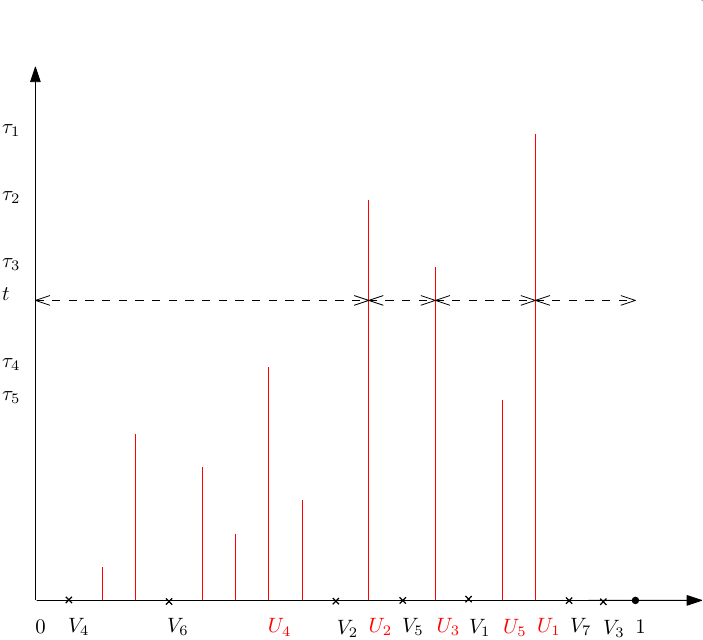}
\caption{Aldous's construction of Kingman's coalescent. The vertical axis is the time axis. The stick $i$ is at $U_i$ with height $\tau_i$. The $V_i$'s (the crosses) are the locations of individuals. For the 7 individuals in the figure, the partition at time $t$ is $\{ \{2,4,6\}, \{5\},\{1\},\{3,7\}\}.$}\label{fig:aldous}
\end{figure}

Let $(\zeta_i)_{i\geq 2}$ be independent random variables such that  $\zeta_i\sim \text{Exp}\left({i\choose 2}\right)$.  Define a random sequence $0<\cdots<\tau_3<\tau_2<\tau_1<\infty$ by $\tau_j=\sum_{k=j+1}^\infty \zeta_k.$ Let $(U_1,U_2,\cdots, V_1, V_2,\cdots)$ be i.i.d.\ 
random variables with common law $\text{Uni(0,1)}$ and independent of $(\tau_j)_{j\geq 1}$. Note that almost surely random variables in $U_i$'s and $V_i$'s are different from each other. Define a function $T:(0,1)\mapsto [0, \infty)$ such that $T(U_j)=\tau_j$ for any $j\geq 1$, and $T(u)=0$ if $u\notin\{U_1,U_2,\cdots\}$. We call the vertical line from $(U_i,\tau_i)$ down to $(U_i, 0)$ the stick $i$, see Figure \ref{fig:aldous}. Then for any $t\geq 0$, we define a partition of $\N$ such that $i,j$ are in the same block if and only if $$t\geq \sup_{\text{any } u\in(0,1) \text{ between } V_i,V_j}T(u).$$
The resulting process takes values in $\mathcal P(\infty)$ for any $t\geq 0$, and has the same law as $\Pi^\infty$. In this construction, we call $V_i$ the location of individual $i$, and $U_i$ the location of stick $i$. Note that this construction applies to finitely many individuals as a natural restriction, see an example of $7$ individuals in Figure \ref{fig:aldous}. 

Recall the sequence $(M_j)$ introduced at the beginning of  Section \ref{sec:main}. Using Aldous's construction, it is clear that $M_j=\tau_j$ for any $j\geq 1$. We shall from now on only use the notation $(\tau_j)$ as further discussions are based on Aldous's construction.

\subsection{Identify $(L_n)$ from Aldous's construction}\label{sec:identifyln}
Aldous's construction gives a realisation of $\Pi^\infty$, and thus we can use it to identify $(L_n)$ and $(R,A)$ in the recursive construction. We first deal with $(L_n)$. We start with this question: find $k$ such that $L_k=\tau_1$. Recall $U_1,U_2,\cdots, V_1,V_2,\cdots$ in Aldous's construction. 
\begin{lemma}\label{lem:tau_1}
%With individual $1$ at $V_1$ and stick $1$ at $U_1$, count individuals $2,3,\cdots$ at $V_2, V_3,\cdots$ one after another until 
Let $k$ be the smallest integer such that $U_1$ is between $V_k$ and $V_1$. Then $L_k=\tau_1.$
\end{lemma}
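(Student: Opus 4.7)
The plan is to express $L_k$ directly in terms of the ingredients of Aldous's construction and then reduce the claim to an elementary observation about which side of the point $U_1$ the locations $V_i$ fall on.

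First I would translate the recursive construction into the language of Aldous's construction. Since $L_k$ is the first time at which individual $k$ joins the partition $\Pi^{k-1}$, and since in Aldous's picture individuals $i$ and $k$ are in the same block at time $t$ iff $\sup_u T(u) < t$ where the supremum is taken over $u \in (0,1)$ lying between $V_i$ and $V_k$, the lineage length admits the compact representation
\begin{equation*}
L_k \;=\; \min_{1 \leq i \leq k-1}\; \max\bigl\{\,T(u) : u \text{ lies between } V_i \text{ and } V_k\,\bigr\}.
\end{equation*}
With this formula in hand, the lemma becomes a deterministic statement about the configuration $(U_j, V_j, \tau_j)$.

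Next, to get the upper bound $L_k \leq \tau_1$, I would simply take $i = 1$ in the minimum above: by the defining property of $k$, the point $U_1$ lies between $V_1$ and $V_k$, so the set whose maximum we form contains $T(U_1) = \tau_1$, and since $\tau_1$ is the global maximum of $T$, this max equals exactly $\tau_1$. Hence $L_k \leq \tau_1$.

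For the matching lower bound $L_k \geq \tau_1$, the key step is a separation argument that exploits the minimality of $k$. I claim that $U_1$ separates the set $\{V_1, \ldots, V_{k-1}\}$ from $V_k$ in $(0,1)$. Indeed, for any $i$ with $2 \leq i \leq k-1$, the minimality of $k$ forces $U_1$ not to lie between $V_1$ and $V_i$, so $V_i$ must be on the same side of $U_1$ as $V_1$; combined with $U_1$ being between $V_1$ and $V_k$, this places $V_i$ and $V_k$ on opposite sides of $U_1$. Consequently $U_1$ lies between $V_i$ and $V_k$ for every $i \leq k-1$, so each term in the outer minimum is at least $T(U_1) = \tau_1$. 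This gives $L_k \geq \tau_1$, and together with the upper bound closes the argument.

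I do not anticipate a real obstacle here: the only subtlety is the combinatorial observation that the minimality of $k$ propagates from $i = 1$ to all $i < k$, and this is immediate once one draws the three possible relative orderings of $V_1, V_i, U_1$ on the interval. I would include a one-line sentence pointing to Figure \ref{fig:aldous} for geometric intuition rather than a separate picture.
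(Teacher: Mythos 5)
Your proof is correct and follows essentially the same route as the paper's: both hinge on the observation that minimality of $k$ forces $V_1,\dots,V_{k-1}$ onto one side of $U_1$ and $V_k$ onto the other, so that the stick at $U_1$ (the tallest one) is exactly what individual $k$ must clear to join the others. Your explicit min--max formula for $L_k$ merely makes the paper's informal "merge at $\tau_1$, but the first $k-1$ merge strictly earlier" argument more formal; no substantive difference.
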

\begin{proof}
The stick $1$ at $U_1$ splits $(0,1)$ into two subintervals. WLOG, assume that $V_1,V_2,\cdots,V_{k-1}$ are on $(0,U_1)$ and $V_k$ is on $(U_1,1)$. By Aldous's construction, since the stick $1$ at $U_1$ is of length $\tau_1$ and is the largest among all sticks, all these $k$ individuals will merge into one cluster at time $\tau_1$, and the $k-1$ individuals on $(0,U_1)$ merge into one cluster at a time strictly smaller than $\tau_1$. Then the only possibility is that individual $k$ is connected to the coalescent process of the first $k-1$ individuals at time $\tau_1$. Then the lemma is proved. 
\end{proof}

Next we determine $L_n$ for any $n\geq 2$ (recall $L_1=\infty$). 
\begin{corollary}\label{cor:key}
%Given $n (\geq 2)$ individuals planted on $(0,1)$ at locations $V_1, V_2,\cdots, V_n$, 
%Count the sticks at locations $U_1,U_2,U_3,\cdots$ one after another until the smallest integer 
Let $n\geq 2$. Let $k$ be the smallest integer such that $V_n$ is single without any others from  $\{V_1,V_2,\cdots, V_n\}$ that is between $U_k$ and some $U_j$ for $1\leq j<k$ or between  $U_k$ and $0$ or between $U_k$ and $1$. Then $L_n=\tau_k$. 
\end{corollary}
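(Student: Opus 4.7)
The plan is to identify $L_n$ directly from Aldous's construction (restricted to the first $n$ individuals), by tracking the subinterval containing $V_n$ as $t$ varies across the heights $\tau_1>\tau_2>\cdots$. At any time $t$ the ``blocking'' sticks are precisely those of height $\geq t$; they cut $(0,1)$ into subintervals, and individuals sharing a subinterval form a single cluster. With this viewpoint, $L_n$ is the first time $V_n$ lies in a subinterval that also contains some $V_i$ with $i<n$, and this can only occur at one of the values $\tau_j$.

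I will introduce the auxiliary index $k_0$ defined as the smallest $j\geq 1$ such that, after cutting $(0,1)$ by the top $j$ sticks $U_1,\dots,U_j$, the subinterval containing $V_n$ contains no $V_i$ with $i<n$. The first step is to prove $L_n=\tau_{k_0}$. At $t=\tau_{k_0}^{-}$ the blocking sticks are exactly $U_1,\dots,U_{k_0}$, so by definition of $k_0$ the cluster of $V_n$ is disjoint from $\{V_1,\dots,V_{n-1}\}$ and the merger has not yet happened. At $t=\tau_{k_0}^{+}$ the blocking sticks are $U_1,\dots,U_{k_0-1}$ (one fewer), so $V_n$'s subinterval is larger; by the minimality of $k_0$ this larger subinterval does contain some $V_i$ with $i<n$, so the merger has just taken place. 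Hence $L_n=\tau_{k_0}$.

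The second step is to reconcile $k_0$ with the index $k$ described in the statement, i.e.\ to check that $U_{k_0}$ is itself one of the two endpoints bounding $V_n$'s subinterval in the top-$k_0$ cutting (the other endpoint being either some $U_j$ with $j<k_0$, or $0$, or $1$). If $U_{k_0}$ were not such an endpoint, it would lie outside $V_n$'s subinterval, so deleting it (passing from the top $k_0$ to the top $k_0-1$ sticks) would leave $V_n$'s subinterval unchanged; but then the defining property (no $V_i$ with $i<n$) would already hold at level $k_0-1$, contradicting the minimality of $k_0$. Combined with the ordering $\tau_1>\tau_2>\cdots$, which ensures that the sticks taller than $U_{k_0}$ are exactly $\{U_j : j<k_0\}$, this gives the characterisation in the statement.

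The main subtlety, and the only real bookkeeping, is keeping the ``top $k_0$'' and ``top $k_0-1$'' cuttings distinct and verifying that $V_n$'s subinterval in one is obtained from the other by either doing nothing or splitting exactly along $U_{k_0}$; beyond this, the argument is a direct reading of Aldous's construction in the spirit of Lemma \ref{lem:tau_1}.
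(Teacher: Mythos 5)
Your proof is correct and takes essentially the same route as the paper's: reading off the coalescence time of individual $n$ from the subintervals cut by the tallest sticks in Aldous's construction and invoking the minimality of $k$. Your second step --- showing that at the smallest level $k_0$ at which $V_n$ is isolated, $U_{k_0}$ must itself be an endpoint of $V_n$'s subinterval --- makes explicit a point the paper's proof leaves implicit, but it is the same argument.
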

\begin{proof}
The first $k-1$ sticks divide $(0,1)$ into $k$ subintervals.  Individual $n$ is located with some other individuals on one of the subintervals, say $(a,b)$, where $a,b$ are distinct elements in  $\{U_1, U_2,\cdots, U_{k-1}, 0,1\}$. The arrival of stick $k$ at $U_k$ will separate individual $n$ from others on $(a,b)$. This implies $L_n=\tau_k$, following a similar reasoning as in Lemma \ref{lem:tau_1}. Then the proof is finished. 
\end{proof}

We present three more corollaries which will be used for the identification of $(R,A)$. The first one finds the value of $n$ such that $L_n=\tau_k$ for $k\geq 1$, generalising Lemma \ref{lem:tau_1}. 
\begin{corollary}\label{cor:findn}
Let $k\geq 1$. Let $X$ be the closest element to $U_k$ from the left in $\{U_1,U_2,\cdots, U_{k-1},0,1\}$ and $Y$ from the right.  Let $n$ be the smallest integer such that the interval $(X,U_k)$  contains at least one element from $\{V_1,V_2,\cdots, V_n\}$ and the same for $(U_k,Y)$. Then $L_n=\tau_k$. 
\end{corollary}
\begin{proof}
For $k=1$, it is a restatement of Lemma \ref{lem:tau_1}. For $k\geq 2$, we apply Corollary \ref{cor:key} to obtain that for such a unique $n$ we have $L_n=\tau_k$. Then the proof is finished. 
\end{proof}

The second one presents a special scenario where an upper bound for $n$ in the above corollary can be given. 
\begin{corollary}\label{cor:xuy}
Let $s>1$ and consider $V_1,V_2,\cdots,V_s.$ Assume that for $k\geq 1$, $U_k$ is neighbour to some $V_i,V_j$ for $1\leq i\neq j\leq s$ (i.e.\ $U_k$ is between $V_i$ and $V_j$; there exists no other $U_a$ or $V_b$ between $V_i, V_j$ for $1\leq a\leq k-1, 1\leq b\leq s$). Then the individual $n$ with $L_n=\tau_k$ must have $n\leq s$.  
\end{corollary}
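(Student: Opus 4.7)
The plan is to apply Corollary \ref{cor:findn} to each $k\in\{1,\ldots,t\}$ and check that the resulting index $n$ with $L_n=\tau_k$ is at most $s$. For this I would exploit Aldous's construction restricted to the first $s$ individuals and characterize which sticks appear as coalescence times of $\Pi^s$ purely in terms of the spatial positions of $V_1,\ldots,V_s$.

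The key observation I would establish is the following: a stick $U_j$ satisfies $\tau_j=L_n$ for some $n\in\{2,\ldots,s\}$ if and only if $U_j$ is the tallest stick lying strictly inside a maximal gap between two spatially-consecutive points of $\{V_1,\ldots,V_s\}$. This follows directly from Aldous's construction (equivalently, by unpacking Corollary \ref{cor:key} inside the subconfiguration of the first $s$ individuals): in any such gap, by the time a non-tallest stick's height is reached the two endpoints of the gap already lie in a common block via the taller stick, so a non-tallest stick triggers no merger; conversely, the tallest stick in each gap does merge its two spatially-consecutive $V$-endpoints at its height, producing a coalescence time of $\Pi^s$. Since $\Pi^s$ has exactly $s-1$ coalescence events and there are exactly $s-1$ maximal interior $V$-gaps, this gives a bijection between $\{L_n:2\leq n\leq s\}$ and the tallest-in-gap sticks.

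It then remains to verify that each $U_k$ for $1\leq k\leq t$ is the tallest stick in its $V$-gap with respect to $\{V_1,\ldots,V_s\}$. By the hypothesis, the gap between the immediate $V$-neighbors $V_{i_k}$ and $V_{j_k}$ of $U_k$ contains no other $U_a$ with $a\leq t$; hence any other stick $U_a$ inside the same gap must have index $a>t\geq k$, and by the monotonicity $\tau_1>\tau_2>\cdots$ we obtain $\tau_a<\tau_k$. Therefore $U_k$ is the tallest stick in its gap, so $\tau_k=L_n$ for some $n\leq s$, which is exactly the claim.

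The main conceptual step is the ``tallest-in-gap'' characterization of coalescence sticks for $\Pi^s$; once that is cleanly formulated from Aldous's construction, the hypothesis combined with the monotonicity of $(\tau_j)_{j\geq 1}$ closes the argument with no further obstacles expected.
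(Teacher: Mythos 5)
Your argument is correct, but it reaches the conclusion by a genuinely different route from the paper. The paper argues locally around $U_k$: writing $X$ and $Y$ for the nearest elements of $\{U_1,\dots,U_{k-1},0,1\}$ to the left and right of $U_k$, the hypothesis forces $X<V_i<U_k<V_j<Y$, so each of the two cells adjacent to $U_k$ in the partition induced by the taller sticks already contains an individual of index at most $s$; Corollary~\ref{cor:findn} then pins the individual with $L_n=\tau_k$ down to $n\le i\vee j\le s$. You instead prove a global statement: the set $\{L_2,\dots,L_s\}$ coincides with the heights of the tallest sticks in the $s-1$ interior gaps of $\{V_1,\dots,V_s\}$, and the hypothesis combined with $\tau_1>\tau_2>\cdots$ (any competing stick in $U_k$'s gap has index $>t\ge k$) makes $U_k$ tallest in its gap, hence $\tau_k\in\{L_2,\dots,L_s\}$. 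Both arguments are sound. One ingredient you use implicitly and should state explicitly is the identification of $\{L_2,\dots,L_s\}$ with the jump times of $\Pi^s$; this follows from consistency of the recursive construction (each added individual contributes exactly one new coalescence event, at time $L_n$, and preserves the existing ones), and it is what licenses matching the $s-1$ coalescence events of $\Pi^s$ with the $s-1$ tallest-in-gap sticks. What your approach buys is a clean, reusable description of which sticks are ``active'' for a fixed sample of size $s$; what the paper's buys is the sharper quantitative bound $n\le i\vee j$ and a form that feeds directly into the identification algorithm of Section~\ref{sec:simple}.
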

\begin{proof}
WLOG, assume $V_i<U_k<V_j$. Let $X$ be the closest element to $U_k$ from the left in  $\{U_1,U_2,\cdots,U_{k-1},0,1\}$  and $Y$ from the right. Then we have 
$$X<V_i<U_k<V_j<Y.$$
Then by Corollary \ref{cor:findn}, the individual $n$ such that $L_n=\tau_k$ must have $n\leq i\vee j\leq s.$ Then the proof is finished. 
\end{proof}

The last one finds a special scenario where a lower bound for $n$ in Corollary \ref{cor:findn} can be given.
\begin{corollary}\label{cor:s>m}
Let $m\geq 1, k\geq 1$. If there is no element from $\{V_1,V_2,\cdots,V_m\}$ that is located between $U_k$ and some $U_j$ for $1\leq j<k$, or between $U_k$ and $0$, or between $U_k$ and $1$, then the individual $n$ with $L_n=\tau_k$ must have $n>m$.  
\end{corollary}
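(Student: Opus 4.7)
The plan is to derive the corollary as a direct consequence of Corollary \ref{cor:findn}. That corollary characterises the unique $n$ with $L_n=\tau_k$ as the smallest integer such that $V_n$ is a ``neighbor'' of $U_k$ and of some adjacent $U_j$ (for $1\leq j<k$), or of $U_k$ and the endpoint $0$ or $1$. The definition of ``neighbor'' in that corollary, namely that $V_n$ lies between $U_k$ and $U_j$ with no other $U_s$ ($s<k$) or $V_t$ ($t<n$) between them, forces $U_j$ to be an immediate neighbor of $U_k$ in $\{0,1,U_1,\ldots,U_{k-1}\}$. Denoting by $a_k$ and $b_k$ the nearest elements of $\{0,1,U_1,\ldots,U_{k-1}\}$ to $U_k$ on the left and right respectively, Corollary \ref{cor:findn} thus tells us that the $n$ with $L_n=\tau_k$ must satisfy $V_n\in(a_k,U_k)\cup(U_k,b_k)$.

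Next, I would observe that the hypothesis of the present corollary says precisely that none of $V_1,\ldots,V_m$ lies in $(a_k,U_k)\cup(U_k,b_k)$: the phrase ``between $U_k$ and some $U_j$ for $1\leq j<k$, or between $U_k$ and $0$, or between $U_k$ and $1$'' is, consistently with Corollaries \ref{cor:key} and \ref{cor:findn}, read as ``between $U_k$ and the adjacent element of $\{0,1,U_1,\ldots,U_{k-1}\}$ on one side'' (otherwise the condition, once the ``$0$'' and ``$1$'' alternatives are added, degenerates to the whole of $(0,1)\setminus\{U_k\}$ and would be vacuously false). Combining this with the preceding paragraph, the unique $V_n$ with $L_n=\tau_k$ cannot coincide with any of $V_1,\ldots,V_m$, whence $n>m$.

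The argument is essentially a one-line application of Corollary \ref{cor:findn}, so there is no substantive obstacle. The only point that needs care is aligning the phrasing of the hypothesis with the geometric condition (nearest-neighbor interpretation of ``some $U_j$'') already used in Corollaries \ref{cor:key} and \ref{cor:findn}; once that reading is fixed, the conclusion $n>m$ is immediate.
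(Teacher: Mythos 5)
Your argument is correct and takes essentially the same route as the paper, which also disposes of this corollary in one line by observing that the condition of Corollary \ref{cor:findn} is not yet met by any of $V_1,\ldots,V_m$, so more individuals must be considered and hence $n>m$. Your explicit nearest-neighbour reading of ``between $U_k$ and some $U_j$'' (i.e., $V_n$ lying in one of the two subintervals of $(0,1)\setminus\{0,1,U_1,\ldots,U_{k-1}\}$ adjacent to $U_k$) is the intended, and indeed the only non-degenerate, interpretation.
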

\begin{proof}
To find $n$ such that $L_n=\tau_k$, we need to consider more $V's$ so that the condition in Corollary \ref{cor:findn} is satisfied. Therefore $n>m.$
\end{proof}
\subsection{Identify $(R,A)$ from Aldous's construction}\label{sec:identify}
Now we provide an algorithm to identify $(R,A)$ based on Corollary \ref{cor:findn}. 
\begin{corollary}\label{cor:shell}
We have $R_1=1$ by definition and the value of $A_1$ is given by Lemma \ref{lem:tau_1} or Corollary \ref{cor:findn}: $A_1$ is the smallest integer such  that $U_1$ is between $V_{A_1}$ and $V_1$. We have $L_{A_1}=\tau_1$. 

In general, given $(R_i, A_i)$ for some $i\geq 1$, we perform the following loop to obtain $(R_{i+1}, A_{i+1})$. 
\begin{itemize}
\item for $j\geq R_i+1$
\item find $n$ using Corollary \ref{cor:findn} such  that $L_n=\tau_{j}$. If $n<A_i,$ then continue the loop with $j=j+1$; otherwise (i.e.\ $n>A_i$) let $A_{i+1}=n$ and $R_{i+1}=j$ (hence $L_{A_{i+1}}=\tau_{R_{i+1}}$), and get out of the loop.
\end{itemize}
\end{corollary}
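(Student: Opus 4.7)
The plan is to translate the definitions of $A_i$ and $R_i$ into the language of Aldous's construction via the identification $M_j=\tau_j$ noted at the end of Section~\ref{sec:identifyln}, and then read off the algorithm from Corollary~\ref{cor:findn}, which, for each $j\geq 1$, produces the unique individual $n_j$ with $L_{n_j}=\tau_j$.

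The initial case is almost immediate: by definition $A_1$ is the unique $n$ with $L_n=M_1=\tau_1$, and the $n_1$ furnished by Corollary~\ref{cor:findn} at $k=1$ is exactly the smallest integer with the stated neighbor property (this is Lemma~\ref{lem:tau_1} restated).

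For the inductive step I would unpack $L_{A_{i+1}}=M_{R_{i+1}}=\tau_{R_{i+1}}$: it says $A_{i+1}=n_{R_{i+1}}$ and that $R_{i+1}$ is the smallest $j\geq 1$ with $n_j>A_i$. The algorithm starts its search at $j=R_i+1$ rather than at $j=1$, so the substantive claim is that $n_j\leq A_i$ for every $j\leq R_i$. I would handle this by a short induction on $i$: the inductive hypothesis gives $n_{R_s}=A_s\leq A_i$ for each $s\leq i$, and for a skipped ranking $R_{s-1}<j<R_s$, if we had $n_j>A_{s-1}$ then $L_{n_j}=\tau_j>\tau_{R_s}=L_{A_s}$ together with $n_j>A_{s-1}$ would contradict the maximality of $A_s$ over $\{m:m>A_{s-1}\}$, forcing $n_j\leq A_{s-1}\leq A_i$.

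Once the search starts at $j=R_i+1$, the exit condition is clear: the first $j$ with $n_j>A_i$ must be the smallest such index overall, hence equals $R_{i+1}$, and $n_j=A_{i+1}$. The case $n_j=A_i$ inside the loop cannot occur because $j>R_i$ forces $\tau_j\neq\tau_{R_i}=L_{A_i}$, so the dichotomy $n<A_i$ versus $n>A_i$ used by the algorithm is exhaustive. The whole argument is essentially bookkeeping given Corollary~\ref{cor:findn}; the only piece requiring any real care is the reverse implication that every previously-skipped ranking corresponds to an individual that has already arrived by time $A_i$, which the induction above supplies, and I do not anticipate any genuine obstacle beyond it.
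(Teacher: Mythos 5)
Your proposal is correct and follows essentially the same route as the paper: both treat Corollary \ref{cor:findn} as the core and verify that the loop's bookkeeping matches the definition of $(R,A)$. The only difference is that you explicitly justify (by induction, via the maximality of $L_{A_s}$ over $\{L_m: m>A_{s-1}\}$) why every ranking $j\leq R_i$ belongs to an individual $n_j\leq A_i$ and why $n_j=A_i$ cannot occur for $j>R_i$ — details the paper's one-paragraph proof treats as an immediate restatement of the definitions.
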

\begin{proof}
We only need to check how the algorithm produces $(R_{i+1}, A_{i+1})$ given $(R_i, A_i)$.  We count $R_i+1, R_i+2, R_i+3,\cdots$ until the first integer $k$ such that the lineage length of rank $k$ belongs to an individual, say $m$, which is larger than $A_i$. Then we have found $R_{i+1}=k, A_{i+1}=m,$ based on the definition of $(R,A)$. This is a restatement of the algorithm in the corollary and thus the proof is finished. \end{proof}

It is clear that Corollary \ref{cor:shell} is like a shell with the core being Corollary \ref{cor:findn}. We will provide another way of identifying $(R,A)$  so that in Corollary \ref{cor:shell} we do not need to find the exact value of $n$ to know that $n<A_i$ (thanks to Corollary \ref{cor:xuy}) and there is a natural way of determining $n$ if $n>A_i$ (thanks to Corollary \ref{cor:s>m}). The proof of Theorem \ref{thm:markov} will reply on this new approach that we introduce in the next section. 

\subsection{A simpler way of identifying $(R,A)$ from Aldous's construction}\label{sec:simple}

%In Aldous's construction, the $U_i$'s and $V_i$'s are given at the beginning.  we will plant the sticks and individuals successively in the order of $U_1, U_2, \cdots$ and $V_1, V_2,\cdots$ respectively. More precisely, we 
Note that Aldous's construction has two systems of notation: $U_i$'s and $V_i$'s, and also sticks and individuals. Stick $i$ is at $U_i$ and individual $i$ is at $V_i$. Once $U_i$'s and $V_i$'s are given, the sticks and individuals are automatically planted at their locations. To facilitate the description of the simpler way of identification, we will plant the sticks and individuals successively in the order of $U_1,U_2,\cdots$ and $V_1,V_2,\cdots$ respectively. The implementation of the  identification can be decomposed into two steps.

\vspace{2mm}

\noindent Step 1: plant the stick $1$ at $U_1$. Plant individuals $1,2,3,\cdots$ successively on $(0,1)$ at locations $V_1,V_2, V_3, \cdots$
until the first integer $n$ ($n\geq 2$) such that $U_1$ is between $V_n$ and $V_1$. We know from Lemma \ref{lem:tau_1} that $A_1=n$ and $R_1=1$.

\vspace{2mm}

\noindent Step 2: we proceed by induction to show the transition from $(R_i,A_i)$ to $(R_{i+1}, A_{i+1})$ for any $i\geq 1$.  Assume the following is true: \begin{itemize}
\item We know $R_i$ already, and  after planting individual, say $h$, we obtain $A_i=h$; 
\item  $U_1, U_2, \cdots, U_{R_i}$ and $V_1, V_2, \cdots, V_{A_i}$ are planted locations; 
\item Moreover, every $U_l$ for $1\leq l\leq R_i$ is neighbour to some $V_k$ and $V_j$ for $1\leq k\neq j\leq A_i$ (i.e.\ $U_l$ is between $V_k,V_j$; there exists no other $U_s$ or $V_t$ between $V_k,V_j$ for $1\leq s\leq R_i, 1\leq t\leq A_i$). 
\end{itemize}
Note that the notion of neighbour here is based on the planted locations. The above assumptions hold for $i=1$, see Step 1.  To obtain $(R_{i+1}, A_{i+1})$, we do the following. 

\begin{itemize}
\item Finding $R_{i+1}$.  Plant remaining sticks successively until the first stick $n$ at $U_n$,  which is not neighbour to some $V_k$ and $V_j$ for $1\leq k\neq j\leq A_i$. Then we have found $R_{i+1}=n.$ Note that in this case, there are three possibilities (and only one of them can happen): 
\begin{enumerate}
\item $U_n$ is neighbour to some  $U_l$ and  $V_j$ for $1\leq l\leq n-1, 1\leq j\leq A_i$;
\item $U_n$ is neighbour to $0$ and some  $V_j$ for $1\leq j\leq A_i$;
\item $U_n$ is neighbour to $1$ and some  $V_j$ for $1\leq j\leq A_i$.
\end{enumerate}
\item Finding $A_{i+1}.$ Next we plant the remaining individuals successively until the first individual $m$ such that $V_m$ is neighbour to $U_n$ and $U_l$ or $U_n$ and $0$ or $U_n$ and $1$ depending on which one of the three possibilities happened in the step above finding $R_{i+1}$.  Then we have found $A_{i+1}=m$.  Moreover, the assumptions made at the beginning of Step 2 hold for $(R_{i+1}, A_{i+1}).$ 
\end{itemize}
\begin{proposition}
The above procedure identifies $(R,A).$
\end{proposition}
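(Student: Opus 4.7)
The plan is an induction on $i\geq 1$ that shows each iteration of the procedure produces the correct $(R_{i+1},A_{i+1})$ while preserving the invariants listed at the start of Step~2: the planted sticks are $U_1,\ldots,U_{R_i}$, the planted individuals are $V_1,\ldots,V_{A_i}$, and every $U_l$ with $l\leq R_i$ has two distinct elements of $\{V_1,\ldots,V_{A_i}\}$ as its immediate neighbours in the current partition. The base case is Step~1: by Lemma~\ref{lem:tau_1}, the first index $n$ at which $U_1$ is flanked by $V_n$ and some $V_j$ with $j<n$ satisfies $L_n=\tau_1$, giving $R_1=1$ and $A_1=n$, and the invariant for $i=1$ is immediate.

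For the inductive step, assume the invariant holds for $(R_i,A_i)$. In the first phase one plants sticks $U_{R_i+1},U_{R_i+2},\ldots$ in order. For each $R_i<j<n$ for which $U_j$ has two $V$-neighbours from $\{V_1,\ldots,V_{A_i}\}$, Corollary~\ref{cor:xuy} (with $t=j$, $s=A_i$) gives that the individual $n'$ with $L_{n'}=\tau_j$ satisfies $n'\leq A_i$, so this $\tau_j$ does not trigger a new arrival beyond $A_i$. At the first index $n$ for which this condition fails, $U_n$ lies in one of the three listed configurations; on its ``empty'' side (towards some $U_l$ with $l<n$, or towards $0$, or towards $1$) no element of $\{V_1,\ldots,V_{A_i}\}$ is present, so Corollary~\ref{cor:s>m} forces the individual $n'$ with $L_{n'}=\tau_n$ to satisfy $n'>A_i$. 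Hence $\tau_n$ is the largest lineage length among $\{L_j:j>A_i\}$, and $R_{i+1}=n$ matches the ranking definition from Section~\ref{sec:main}.

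In the second phase one plants individuals $V_{A_i+1},V_{A_i+2},\ldots$ without adding any sticks, so the sticks $U_1,\ldots,U_{n-1}$ remain complete (planting new $V$'s cannot uncomplete a stick whose two subinterval-halves already contain $V$'s). Let $V_m$ be the first individual to land in the empty-side region of $U_n$ identified in the previous paragraph. At the instant $V_m$ is planted, it is the unique element of $\{V_1,\ldots,V_m\}$ lying between $U_n$ and its nearest element of $\{U_1,\ldots,U_{n-1},0,1\}$ on that side, while the opposite side of $U_n$ still contains $V$'s inherited from $\{V_1,\ldots,V_{A_i}\}$. Moreover, for any $k<n$ the stick $U_k$ is complete, so $V_m$ cannot be alone around $U_k$. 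Corollary~\ref{cor:key} therefore identifies $n$ as the smallest $k$ witnessing the single-$V$ condition for $V_m$, giving $L_m=\tau_n=\tau_{R_{i+1}}$, i.e.\ $A_{i+1}=m$. The invariant is restored since $U_1,\ldots,U_n$ are now all complete with $V$-neighbours from $\{V_1,\ldots,V_m\}$.

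The main subtlety I expect to have to address is the characterisation of $V_m$ in the second phase: as further individuals $V_l$ with $A_i<l<m$ are planted, the ``$V_j$'' playing the non-empty-side neighbour of $U_n$ in the three configurations may be displaced by some newer $V_l$ falling between $V_j$ and $U_n$, so the literal phrasing ``$U_n$ is neighbour to $V_m$ and $V_j$'' has to be interpreted with care. I plan to sidestep this by fixing, at the end of the first phase, the open subregion of $(0,1)$ adjacent to $U_n$ on its empty side within the partition induced by $U_1,\ldots,U_{n-1}$: this subregion depends only on the planted sticks, is devoid of $V$'s from $\{V_1,\ldots,V_{A_i}\}$ by the very definition of $n$, and the algorithm's stopping rule then reduces to the clean statement ``$V_m$ is the first new individual to land in it''. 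With this reformulation, the application of Corollary~\ref{cor:key} and the preservation of the invariant become routine.
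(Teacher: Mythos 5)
Your proof is correct and follows essentially the same route as the paper's: the identification of $R_{i+1}$ rests on Corollary \ref{cor:xuy} for the sticks that remain flanked by two planted individuals and on Corollary \ref{cor:s>m} for the first stick that is not, and $A_{i+1}$ is the first new individual completing that stick. Your write-up is more detailed than the paper's (explicit induction invariant, the neighbour-displacement point, and the use of Corollary \ref{cor:key} in the second phase), but the underlying argument is the same.
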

\begin{proof}
Starting from $(R_i,A_i)$, before finding $R_{i+1}$, we are in the scenario described in Corollary \ref{cor:xuy}. That is, for any $R_i+1\leq n<R_{i+1}$, we have $L_s=\tau_n$ for some $s\leq A_i$. The searching of $R_{i+1}$ stops at $n$ when we are in the scenario described in Corollary \ref{cor:s>m} as more individuals need to be planted successively until finding the individual (which is $A_{i+1}$) that has the lineage length $\tau_{n}$ (we use Corollary \ref{cor:findn} to see that this individual has length $\tau_n$) and thus $R_{i+1}=n$. The proof is finished. 
\end{proof}

\subsection{Proof of Theorem \ref{thm:markov}}

\begin{proof}[Proof of Theorem \ref{thm:markov} - (1)] In the identification procedure described in Section \ref{sec:simple} for identifying $(R,A)$, at any step $i$, we have planted exactly $A_i$ individuals and $R_i$ sticks, in such a way that each stick has to be neighbour to two individuals. Therefore necessarily $A_i\geq R_i+1$.   
%First of all $R_1=1 $ and $A_1\geq 2$, then $A_1-R_1\geq 1$. For any $i\geq 2,$ recall that $L_{A_i}$ is the largest among all lengths that arrive later than $A_{i-1}$ in the recursive construction, and is the $R_i$-th largest among all in $(L_n)_{n\geq 2}$.  If $\min_{2\leq j\leq A_{i-1}}L_j> L_{A_i}$, then $\{L_2,\cdots, L_{A_{i-1}}\}$ constitute the $A_{i-1}-1$ largest lengths in $(L_n)_{n\geq 2}$. That implies $R_{i}=A_{i-1}$. If $L_{A_i}>\min_{2\leq j\leq A_{i-1}}L_{j}$, then $R_i<A_{i-1}$. Therefore in either case we have $R_i\leq A_{i-1}$. Moreover by \eqref{eqn:ar>}, we have $A_i\geq A_{i-1}+1$. Then we conclude that $A_i-R_i\geq 1$ for any $i\geq 1$, which means statement (1) holds true.  
\end{proof}

%For the rest of the proof, we use a new way of looking at Aldous's construction to characterise $(R,A)$ and prove the remaining statements (2)-(4).  The identification procedure in Section \ref{sec:identify} still applies, but

\begin{proof}[Proof of Theorem \ref{thm:markov} - (2)]To determine the law of $A_1$, we first recall the following lemma which is well known, see for instance \cite{dev86}. 
\begin{lemma}\label{lem:simplex}Assume there are $k (k\geq 1)$ i.i.d. uniform random variables on $(0,1)$. Then $(0,1)$ is cut into $k+1$ subintervals whose lengths are exchangeable and the vector of lengths follows the uniform distribution on a standard $k$-simplex. If we plant another independent uniform random variable on $(0,1)$, then it will enter one of the $k+1$ subintervals with equal probability.  Conditioned on entering any subinterval, the resulting $k+2$ subintervals are again exchangeable and the new vector follows the uniform distribution on a standard $(k+1)$-simplex. \end{lemma}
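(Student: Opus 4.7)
The plan is to prove the two claims of Lemma~\ref{lem:simplex} separately, since the second reduces to the first plus an exchangeability argument.

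First I would establish the base claim about $k$ uniform points producing spacings uniform on the $k$-simplex. Let $U_1,\dots,U_k$ be i.i.d.\ uniform on $(0,1)$ with order statistics $U_{(1)}<\cdots<U_{(k)}$, whose joint density is $k!$ on the set $\{0<u_1<\cdots<u_k<1\}$. Define the spacings $S_0 = U_{(1)}$, $S_i = U_{(i+1)}-U_{(i)}$ for $1\le i\le k-1$, and $S_k = 1-U_{(k)}$. The linear change of variables $(U_{(1)},\dots,U_{(k)})\mapsto(S_0,\dots,S_{k-1})$ has Jacobian determinant $\pm 1$, so the joint density of $(S_0,\dots,S_{k-1})$ is the constant $k!$ on the simplex $\{s_i\ge 0,\ \sum_{i=0}^{k-1}s_i\le 1\}$. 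This is the uniform distribution on the standard $k$-simplex, and because its density is symmetric in all $k+1$ coordinates (including $S_k = 1-\sum_{i<k}S_i$), the vector $(S_0,\dots,S_k)$ is exchangeable.

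Next I would handle the insertion claim. Let $V$ be an independent uniform point and set $W_i=U_i$ for $i\le k$, $W_{k+1}=V$, so that $W_1,\dots,W_{k+1}$ are i.i.d.\ uniform. The event ``$V$ falls into the $i$-th subinterval determined by $U_1,\dots,U_k$'' is exactly the event that the rank of $W_{k+1}$ among $W_1,\dots,W_{k+1}$ equals $i$. By the standard fact for i.i.d.\ continuous variables, the rank permutation of $(W_1,\dots,W_{k+1})$ is uniform on $\mathfrak S_{k+1}$ and independent of the order statistics $(W_{(1)},\dots,W_{(k+1)})$. Hence the rank of $V$ is uniform on $\{1,\dots,k+1\}$, giving the equal-probability statement, and conditionally on this rank the $k+1$ order statistics — and therefore the $k+2$ spacings they define — have the same distribution as unconditionally. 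Applying the first part to $W_1,\dots,W_{k+1}$ then identifies this as uniform on the standard $(k+1)$-simplex together with exchangeability.

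There is no real obstacle here: the lemma is a classical consequence of exchangeability and a linear change of variables, and the paper already refers to \cite{dev86}. The only point that deserves care is the conditional statement, where one must justify that conditioning on ``$V$ lies in a specified subinterval'' does not bias the joint law of the resulting $k+2$ spacings; the cleanest way is the independence of rank permutation and order statistics used above, rather than trying to compute the conditional density directly.
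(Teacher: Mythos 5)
Your proof is correct. The paper does not actually prove this lemma---it states it as well known and points to \cite{dev86}---so there is no in-paper argument to compare against; your two-step argument (the order-statistics density $k!$ pushed through a unit-Jacobian linear change of variables to get the constant density of the spacings on the simplex, and then the independence of the rank permutation from the order statistics to handle the insertion of $V$) is the standard proof and correctly addresses the one delicate point, namely that conditioning on which subinterval $V$ enters does not bias the joint law of the resulting $k+2$ spacings.
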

Now we recall that $(U_1,U_2,\cdots, V_1, V_2,\cdots)$ are i.i.d.\ uniform on $(0,1)$. Then using the above lemma and Lemma \ref{lem:tau_1}, we have for any $n\geq 2$, 
\begin{align*}
\mathbb P(A_1=n)&=2\mathbb P(\max_{1\leq i\leq n-1}V_i <U_1,  V_n>U_1)\\
&=2\mathbb P(V_1<U_1)\left(\prod_{j=1}^{n-2}\mathbb P(V_{j+1}<U_1\,|\,\max_{1\leq i\leq j}V_i <U_1)\right)\mathbb P(V_{n}>U_1\,|\,\max_{1\leq i\leq n-1}V_i <U_1)\\
&=2\times\frac{1}{2}\times\prod_{j=1}^{n-2} \frac{j+1}{j+2}\times\frac{1}{n+1}=\frac{2}{n(n+1)}. \end{align*}
Then statement (2) is proved. 
\end{proof}

\begin{proof}[Proof of Theorem \ref{thm:markov} - (3)] 
We will show the probability mass functions for the two laws to deduce the tail probabilities. More precisely, we shall first prove the following: 
\begin{itemize}
\item For any $i\geq 1$ we have $R_i+1\leq R_{i+1}\leq A_i$ and for any $1\leq x\leq A_i-R_i,$
\begin{equation}\label{eqn:ri+1}
\begin{split}\mathbb P(R_{i+1}=R_i+x\,|\,R_i,A_i)
&=\frac{2R_i+2x}{A_i+R_i+x}\left(\prod_{k=1}^{x-1}\frac{A_i-R_i-k}{A_i+R_i+k}\right)\\
&=\frac{2R_i+2x}{A_i+R_i+1}{2A_i\choose A_i-R_i-x}\Big/{2A_i\choose A_i-R_i-1},
\end{split}
\end{equation}
\item and for any $y\geq 1$,  
\begin{align}\label{eqn:ai+1}
\begin{split}
\mathbb P(A_{i+1}=A_i+y\,|\,R_i,A_i,R_{i+1})
&=\frac{1}{A_i+R_{i+1}+y}\prod_{k=1}^{y-1}\left(\frac{A_i+R_{i+1}+k-1}{A_i+R_{i+1}+k}\right)\\
&=\frac{A_i+R_{i+1}}{(A_i+R_{i+1}+y-1)(A_i+R_{i+1}+y)}.
\end{split}
\end{align}
\end{itemize}

We will use the identification procedure in Section \ref{sec:simple}. Under the assumptions in Step 2, $(0,1)$ is divided into $R_i+A_i+1$ subintervals. Among them, there are three categories of subintervals: 
\begin{enumerate}
\item there are $R_i$ pairs of subintervals such that each pair share the same $U_k$ as a common end for some $1\leq k\leq R_i$; 
\item there are two subintervals such that either of them has one end being $0$ or $1$ (cannot have both $0,1$ as ends); 
\item the remaining $A_i-R_i-1$ subintervals can only have ends from $\{V_j: 1\leq j\leq A_i\}$. 
\end{enumerate}

Then following Step 2, we plant remaining sticks starting from $R_i+1$ successively until the first stick $R_i+x$ that is not neighbour to some $V_k$ and $V_j$ for $1\leq k\neq j\leq A_i$. Note that every stick $j$ for $R_i+1\leq j<R_i+x$ enters a subinterval of category 3, and thus killing one subinterval of category 3 and adding a pair of subintervals of category 1. The stick $R_i+x$ will enter a subinterval of category 1 or 2. In other words, 
\begin{align*}\mathbb P(R_{i+1}=R_i+x\,|\, R_i,A_i)&=\mathbb P(\text{stick $j$ enters a subinterval of category 3 for $R_i+1\leq j<R_i+x$},\\
& \quad \quad \text{ and stick $R_i+x$ enters a subinterval of category 1 or 2}\,|\,R_i,A_i)\end{align*}
Then clearly we have $R_i+1\leq R_{i+1}\leq A_i$ since the number of subintervals of category 3 is $A_i-R_i-1$. Using Lemma \ref{lem:simplex} and conditional probability formula,  the above display yields the first equality in \eqref{eqn:ri+1}. The second equality is a direct simplification. 

The next step is to plant remaining individuals until stick $R_i+x$ is again neighbour to two planted individuals. We omit the proof of \eqref{eqn:ai+1}, which is very similar to that of \eqref{eqn:ri+1}. 

Next we deduce the tail probabilities. We will only show \eqref{eqn:rtail} as \eqref{eqn:atail} is straightforward. If $x=A_i-R_i$, then we obtain 
$$\mathbb P(R_{i+1}\geq A_i\,|\, R_i,A_i)=\frac{1}{{2A_i-1\choose A_i-R_i-1}}=\mathbb P(R_{i+1}=A_i\,|\, R_i,A_i).$$ For $1\leq x<A_i-R_i$, we show that \eqref{eqn:rtail} implies \eqref{eqn:ri+1}: 
\begin{align*}
&\mathbb P(R_{i+1}\geq R_i+x\,|\, R_i,A_i)-\mathbb P(R_{i+1}\geq R_i+x+1\,|\, R_i,A_i)\\
=&\frac{{2A_i-1\choose A_i-R_i-x}}{{2A_i-1\choose A_i-R_i-1}}-\frac{{2A_i-1\choose A_i-R_i-x-1}}{{2A_i-1\choose A_i-R_i-1}}\\
=&\frac{A_i+R_i+x}{A_i+R_i+1}\frac{{2A_i\choose A_i-R_i-x}}{{2A_i\choose A_i-R_i-1}}
-\frac{A_i-R_i-x}{A_i+R_i+1}\frac{{2A_i\choose A_i-R_i-x}}{{2A_i\choose A_i-R_i-1}}\\
=&\frac{2R_i+2x}{A_i+R_i+1}\frac{{2A_i\choose A_i-R_i-x}}{{2A_i\choose A_i-R_i-1}}
&
\end{align*}
which is exactly the probability $\mathbb P(R_{i+1}= R_i+x\,|\, R_i,A_i)$ given by \eqref{eqn:ri+1}. Then we conclude that \eqref{eqn:rtail} holds true. 
\end{proof}

Finally, all statements in Theorem \ref{thm:markov} are proved, and the proof is complete. 

\section{Proof of Theorem \ref{cor:limit}}\label{sec:2}

\subsection{Preliminaries}
In this section, we prove two lemmas for preparation. We use $\xrightarrow[]{w}$ to denote the weak convergence of probability measures; $\xrightarrow[]{d}$ for the convergence in distribution for random variables and $\stackrel{d}{=}$ for being equal in distribution. We use $\mathcal L(\cdot)$ to denote the law of a random object. 

\begin{lemma}\label{lem:forr}
Let $n\geq 1.$ Consider a random variable $W:=W_n$ such that 
$$\mathbb P(W=k)=ck{2n\choose n-k},\quad  0\leq k\leq n,$$
where $c>0$ is the normalising constant. Let $t>0.$ Then uniformly in $s\in[0,t]$, we have 
\begin{equation}\label{eqn:local}\sqrt{n}\mathbb P(W=\lfloor s\sqrt{n} \rfloor )\longrightarrow 2se^{-s^2}, \text{ as } n\to\infty.\end{equation}
As a consequence, 
$$\mathcal L\left(\frac{W^2}{n}\right)\xrightarrow[n\to\infty]{w} \text{Exp}(1).$$
\end{lemma}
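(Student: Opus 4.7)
The plan is to first pin down the normalizing constant $c_n$ in closed form, then to run a short local Laplace-type expansion of $\binom{2n}{n-k}/\binom{2n}{n}$ for $k$ of order $\sqrt{n}$, and finally to deduce the weak convergence of $W^2/n$ by integrating the local limit.

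For the normalizing constant, I would start from $\sum_{k=0}^{n}k\binom{2n}{n-k}$, use the symmetry $\binom{2n}{n-k}=\binom{2n}{n+k}$, the substitution $j=n+k$, and the identity $j\binom{2n}{j}=2n\binom{2n-1}{j-1}$, together with $\sum_{i=n}^{2n-1}\binom{2n-1}{i}=2^{2n-2}$. A short calculation gives
\begin{equation*}
\sum_{k=0}^{n}k\binom{2n}{n-k}=\frac{n}{2}\binom{2n}{n},\qquad c=c_n=\frac{2}{n\binom{2n}{n}},
\end{equation*}
so that $\P(W=k)=\tfrac{2k}{n}\cdot\binom{2n}{n-k}/\binom{2n}{n}$.

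For the local limit, I would write
\begin{equation*}
\frac{\binom{2n}{n-k}}{\binom{2n}{n}}=\prod_{j=1}^{k}\frac{n-j+1}{n+j}=\exp\!\left(\sum_{j=1}^{k}\log\frac{n-j+1}{n+j}\right).
\end{equation*}
For $k=\lfloor s\sqrt{n}\rfloor$ with $s\in[0,t]$, each term expands as $\log(1-(2j-1)/(n+j))=-(2j-1)/n+O(j^{2}/n^{2})$; summing over $j\le k=O(\sqrt n)$ gives $-k^{2}/n+O(n^{-1/2})$ uniformly in $s\in[0,t]$. Hence $\binom{2n}{n-k}/\binom{2n}{n}=e^{-s^{2}}(1+o(1))$ uniformly, and combined with $2k/n=2s/\sqrt{n}+O(1/n)$ this yields \eqref{eqn:local}.

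For the consequence, fix $u>0$ and write $\P(W^{2}/n\le u)=\sum_{k=0}^{\lfloor\sqrt{un}\rfloor}\P(W=k)$. Substituting the uniform asymptotic, this is a Riemann sum with mesh $1/\sqrt n$ for $\int_{0}^{\sqrt u}2s\,e^{-s^{2}}ds=1-e^{-u}$; the uniformity from \eqref{eqn:local} lets us pass to the limit and conclude $\P(W^{2}/n\le u)\to 1-e^{-u}$, i.e. $W^{2}/n\xrightarrow{d}\text{Exp}(1)$. The only slightly delicate point is the uniform control of the Taylor remainder in the second paragraph, which I would bound via $|\log(1-x)+x|\le x^{2}$ for $|x|\le 1/2$ together with the crude bound $k\le t\sqrt n$; everything else is routine.
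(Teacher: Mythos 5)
Your proposal is correct and follows essentially the same route as the paper: evaluate the normalizing constant (your exact identity $\sum_{k=0}^{n}k\binom{2n}{n-k}=\frac{n}{2}\binom{2n}{n}$ is precisely what the paper's $I_1-I_2$ computation yields before applying Stirling), then establish the local asymptotic of $\binom{2n}{n-k}/\binom{2n}{n}$ for $k\asymp\sqrt{n}$ and integrate. Your product-and-logarithm expansion of the binomial ratio and the explicit Riemann-sum argument for the weak limit merely fill in steps the paper treats as standard, so no changes are needed.
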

\begin{proof} It suffices to prove \eqref{eqn:local}. We first find the asymptotic equivalent of $c$. Let $\alpha\sim B(2n,1/2)$, a binomial random variable with parameters $2n$ and $1/2$. Note that 
$$\frac{1}{c}=\sum_{k=0}^nk{2n\choose n-k}=\sum_{k=0}^nn{2n\choose n-k}-\sum_{k=0}^{n}(n-k){2n\choose n-k}=:I_1-I_2.$$
For the first term, we have 
$$2^{-2n}I_1=n\mathbb P(\alpha\leq n)=\frac{n}{2}+\frac{n}{2}\mathbb P(\alpha=n)=\frac{n}{2}+\frac{n}{2}2^{-2n}{2n\choose n}=\frac{n}{2}+\frac{\sqrt{n}}{2\sqrt{\pi}}(1+o(1)),$$
where the last equality is due to Stirling formula.  For the second term, we have 
\begin{align*}
2^{-2n}I_2&=\sum_{k=0}^nk{2n\choose k}2^{-2n}=2n\sum_{k=1}^{n}{2n-1\choose k-1}2^{-2n}=2n\sum_{k=0}^{n-1}{2n-1\choose k}2^{-2n}\\
&=2n\frac{\sum_{k=0}^{2n-1}{2n-1\choose k}}{2}2^{-2n}=2n\times\frac{2^{2n-1}}{2}2^{-2n}=\frac{n}{2}.
\end{align*}
Then we obtain that 
\begin{equation}\label{eqn:1/c}\frac{1}{c}= \frac{\sqrt{n}}{2\sqrt{\pi}}2^{2n}(1+o(1)).\end{equation}
Therefore,  uniformly for $s\in[0,t],$ as $n\to\infty$, 
$$\mathbb P(W=\lfloor s\sqrt{n} \rfloor )=\frac{2\sqrt{\pi}}{\sqrt{n}}2^{-2n}\lfloor s\sqrt{n} \rfloor {2n\choose n-\lfloor s\sqrt{n} \rfloor }(1+o(1))=e^{-s^2}\frac{2s}{\sqrt{n}}(1+o(1)),$$
where the first $o(1)$ comes from \eqref{eqn:1/c} and the second equality follows from Stirling formula, and both $o(1)$'s converge to $0$ uniformly in $s\in[0,t]$ as $n\to\infty$. Then the proof is finished.
\end{proof}

\begin{lemma}\label{lem:tight}
The process $\left(\frac{R_i^2}{A_i}\right)_{i\geq 1}$ is tight. 
\end{lemma}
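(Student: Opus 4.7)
The plan is to establish $\sup_{i \geq 1} \E[R_i^2/A_i] < \infty$ by a Foster--Lyapunov drift argument with Lyapunov function $V(R,A) := R^2/A$; tightness of $(R_i^2/A_i)$ then follows immediately from Markov's inequality, since $\P(R_i^2/A_i \geq K) \leq \E[R_i^2/A_i]/K$ uniformly in $i$. Heuristically, the limit dynamics $\xi_{i+1} = (\xi_i + X_i)e^{-\eta_i}$ with $X_i, \eta_i$ i.i.d.\ $\text{Exp}(1)$ from Corollary \ref{cor:limit} give $\E[V_{i+1} \mid V_i] \approx (V_i + 1)/2$, a contraction with fixed point $V = 1$. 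The goal is to prove a rigorous counterpart: a uniform drift inequality
$$\E[V(R_{i+1}, A_{i+1}) \mid R_i, A_i] \leq \alpha V(R_i, A_i) + \beta$$
with $\alpha < 1$ and $\beta < \infty$.

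Using the tower property and the conditional independence of $A_{i+1}$ from $R_i$ given $(R_{i+1}, A_i)$, this reduces to two separate estimates. First, from the pmf \eqref{eqn:ai+1}, partial-fraction manipulation yields
$$\E[1/A_{i+1} \mid R_{i+1}, A_i] = (A_i + R_{i+1}) \sum_{y=1}^{\infty} \frac{1}{(A_i+y)(A_i+R_{i+1}+y-1)(A_i+R_{i+1}+y)},$$
which, as $A_i \to \infty$ with $\rho := R_{i+1}/A_i$ fixed, scales as $h(\rho)/A_i$ with $h(\rho) = (1+\rho)\ln(1+\rho)/\rho^2 - 1/\rho$; a direct differentiation shows $h$ is strictly decreasing on $(0, \infty)$ with $\sup_{\rho \geq 0} h(\rho) = h(0^+) = 1/2$, so I aim to show $A_i \cdot \E[1/A_{i+1} \mid R_{i+1}, A_i] \leq \alpha$ uniformly for some $\alpha < 1$ (expected to be slightly above $1/2$). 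Second, using the product-form tail from \eqref{eqn:rtail} together with the inequality $\ln(1-t) \leq -t$ gives a Gaussian-type bound $\P(R_{i+1} - R_i \geq y \mid R_i, A_i) \leq \exp(-\sum_{j=1}^{y-1} 2(R_i + j)/(A_i + R_i + j))$. From this, $\E[R_{i+1}^2 \mid R_i, A_i] \leq R_i^2 + \gamma A_i$ follows for some absolute constant $\gamma$, obtained by splitting the sum representing $\E[(R_i + Y)^2]$ (with $Y = R_{i+1} - R_i$) into the regimes $Y \leq R_i$ and $Y > R_i$, and checking in either regime that $2R_i \E[Y]$ and $\E[Y^2]$ are both $O(A_i)$.

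Combining the two estimates gives $\E[V_{i+1} \mid R_i, A_i] \leq \alpha (V_i + \gamma) = \alpha V_i + \alpha \gamma$. Iterating, and noting that $\E[V(R_1, A_1)] = \E[1/A_1] = \sum_{n \geq 2} 2/[n^2(n+1)] < \infty$ by Theorem \ref{thm:markov}(2), we obtain $\sup_i \E[V_i] \leq \E[V_1] + \alpha \gamma/(1-\alpha) < \infty$, and Markov's inequality concludes tightness. The main obstacle will be to establish $\alpha < 1$ \emph{uniformly} over all valid $(R_{i+1}, A_i)$: the asymptotic analysis covers large $A_i$, and direct computation handles any finite collection of small $A_i$, but bridging the two cleanly may require either proving a monotonicity statement for the discrete quantity $A_i \cdot \E[1/A_{i+1} \mid R_{i+1}, A_i]$ in $A_i$, or a careful summation-by-parts estimate. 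A secondary subtlety is controlling the cross term $2R_i\E[R_{i+1} - R_i]$ in the second-moment bound when $R_i$ is comparable to $A_i$, which is where the case split above enters.
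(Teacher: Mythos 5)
Your strategy is sound and is, at its core, the same as the paper's: a Foster--Lyapunov drift inequality obtained by splitting the transition into the $R$-step and the $A$-step (via the tower property, using that the law of $A_{i+1}$ given $(R_i,A_i,R_{i+1})$ depends only on $(A_i,R_{i+1})$), where the $A$-step supplies the contraction factor strictly less than $1$, followed by iteration and Markov's inequality. The difference is the choice of Lyapunov function and the way the $R$-step is estimated. The paper bounds $\E[R_i/\sqrt{A_i}]$ rather than $\E[R_i^2/A_i]$: it computes $\E[R_{i+1}\mid R_i,A_i]=R_i+1+I$ \emph{exactly} via a combinatorial telescoping identity, bounds $I$ in two regimes ($R_i\lessgtr\sqrt{A_i}$), and pairs this with $\E[1/\sqrt{A_{i+1}}\mid R_{i+1},A_i]\le c_1/\sqrt{A_i}$ with $c_1\in(2/3,1)$. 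You work with the second moment, replacing the exact identity by the Chernoff-type bound $\P(R_{i+1}-R_i\ge x)\le\exp(-\sum_{j<x}2(R_i+j)/(A_i+R_i+j))$ to get $\E[R_{i+1}^2\mid R_i,A_i]\le R_i^2+\gamma A_i$; this is correct (the cross term $2R_i\E[Y]$ is at most $2R_i+R_i\frac{A_i-R_i}{R_i+1}\le 2A_i$ using the bound on $I$ already implicit in the tail, and $\E[Y^2]=O(A_i)$ from the Gaussian tail), and it has the mild advantage of avoiding the paper's two-regime case split. The "main obstacle" you flag is not actually an obstacle: since the tail $\P(A_{i+1}\ge A_i+y\mid\cdot)=\frac{b}{b+y-1}$ with $b=A_i+R_{i+1}$ is nondecreasing in $b$, the quantity $\E[1/A_{i+1}\mid R_{i+1},A_i]$ is maximized at the smallest admissible $R_{i+1}$, and then the telescoping bound
\begin{equation*}
\sum_{y\ge1}\frac{A_i+1}{(A_i+y)^2(A_i+y+1)}\le(A_i+1)\sum_{y\ge1}\frac{1}{(A_i+y-1)(A_i+y)(A_i+y+1)}=\frac{1}{2A_i}
\end{equation*}
gives $\alpha=1/2$ uniformly in $A_i$, with no asymptotic analysis or bridging argument needed; the paper's bound \eqref{eqn:denominator} resolves the same uniformity issue in the same way. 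With that filled in, your argument closes: $\E[V_{i+1}]\le\tfrac12\E[V_i]+\tfrac{\gamma}{2}$, $\E[V_1]=\E[1/A_1]<\infty$, and tightness follows.
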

\begin{proof}
Tightness means that for any $\epsilon>0,$ there exists a compact set $E\subset[0,\infty)$ such that for any $i\geq 1,$ $\mathbb P\left(\frac{R_i^2}{A_i}\notin E\right)\leq \epsilon.$ To prove this, it suffices to show that there exists $c>0$ such that the following holds true
\begin{equation}\label{eqn:bounded}
\E\left[\frac{R_i}{\sqrt{A_i}}\right]\leq c, \quad \forall i\geq 1.
\end{equation}
 Indeed, applying Markov inequality, we obtain that $\mathbb P\left(\frac{R_i^2}{A_i}> N\right)=\mathbb P\left(\frac{R_i}{\sqrt{A_i}}> \sqrt{N}\right)\leq \frac{c}{\sqrt{N}}\leq \epsilon$ if we take $N$ large. Then the compact set can be set as $E=[0,N]$ and the tightness is obtained. 

Denote $I:=\frac{\sum_{x=1}^{A_i-R_i-1}{2A_i-1\choose A_i-R_i-x-1}}{{2A_i-1\choose A_i-R_i-1}}$. Note that using \eqref{eqn:rtail}, we have 
\begin{align*}
&\E[A_i-R_{i+1}\, |\, R_i, A_i]\\
&=\sum_{x=1}^{A_i-R_i-1}(A_i-R_i-x)\left(\frac{{2A_i-1\choose A_i-R_i-x}}{{2A_i-1\choose A_i-R_i-1}}-\frac{{2A_i-1\choose A_i-R_i-x-1}}{{2A_i-1\choose A_i-R_i-1}}\right)\\
&=\sum_{x=1}^{A_i-R_i-1}(A_i-R_i-x)\frac{{2A_i-1\choose A_i-R_i-x}}{{2A_i-1\choose A_i-R_i-1}}-\sum_{x=1}^{A_i-R_i-1}(A_i-R_i-x-1+1)\frac{{2A_i-1\choose A_i-R_i-x-1}}{{2A_i-1\choose A_i-R_i-1}}\\
&=\sum_{x=1}^{A_i-R_i-1}(2A_i-1)\frac{{2A_i-2\choose A_i-R_i-x-1}}{{2A_i-1\choose A_i-R_i-1}}-\sum_{x=1}^{A_i-R_i-2}(2A_i-1)\frac{{2A_i-2\choose A_i-R_i-x-2}}{{2A_i-1\choose A_i-R_i-1}}-I\\
&=(2A_i-1)\sum_{x=1}^{A_i-R_i-2}\frac{{2A_i-2\choose A_i-R_i-x-1}-{2A_i-2\choose A_i-R_i-x-2}}{{2A_i-1\choose A_i-R_i-1}}+\frac{2A_i-1}{{2A_i-1\choose A_i-R_i-1}}-I\\
&=(2A_i-1)\frac{{2A_i-2\choose A_i-R_i-2}}{{2A_i-1\choose A_i-R_i-1}}-I\\
&=A_i-R_i-1-I.
\end{align*}Then
\begin{equation}\label{eqn:Rdecom}\E[R_{i+1}\,|\,R_i,A_i]=R_i+1+I.\end{equation}
Note that 
\begin{align}\label{eqn:I<}
I&\leq \frac{\sum_{x=1}^{A_i-R_i}{2A_i\choose A_i-R_i-x}}{{2A_i\choose A_i-R_i}}&\\
&\leq \frac{1}{2R_i+2}\frac{\sum_{x=1}^{A_i-R_i}2(R_i+x){2A_i\choose A_i-R_i-x}}{{2A_i\choose A_i-R_i}}=\frac{A_i+R_i+1}{2R_i+2}\frac{{2A_i\choose A_i-R_i-1}}{{2A_i\choose A_i-R_i}}=\frac{A_i-R_i}{2R_i+2}\nonumber\\
&\label{eqn:I1}\leq \frac{A_i}{R_i},
\end{align}
where the first equality is due to \eqref{eqn:ri+1}. 
Conditional on $A_i$ and $R_i$, let $\alpha\sim B(2A_i, 1/2)$. If $R_i\leq \sqrt{A_i}$, then using \eqref{eqn:I<}, we have \begin{equation}
\begin{split}
I&\leq 2^{2A_i}\frac{\sum_{x=1}^{A_i-R_i}{2A_i\choose A_i-R_i-x}2^{-2A_i}}{{2A_i\choose A_i-\lfloor \sqrt{A_i}\rfloor}}=2^{2A_i}\frac{\mathbb P(0\leq \alpha < A_i-R_i\, |\, A_i,R_i)}{{2A_i\choose A_i-\lfloor \sqrt{A_i}\rfloor}}\\
&\leq \frac{2^{2A_i}}{{2A_i\choose A_i-\lfloor \sqrt{A_i}\rfloor}}\leq c_0 \sqrt{A_i}, \quad \text{ if $i$ (or equivalently $A_i$, see \eqref{eqn:ar>}) is large enough,}
\end{split}
\end{equation}
where  $c_0>0$ does not depend on anything and the last inequality is by Stirling formula.  In conclusion, for $i$ large enough, 
\begin{equation}\label{eqn:2bounds}
\E[R_{i+1}\,|\,R_i,A_i]\leq \begin{cases}R_i+1+\frac{A_i}{R_i}, \quad \text{ if }R_i>  \sqrt{A_i}; \\
R_i+1+c_0 \sqrt{A_i},\quad \text{ if }R_i\leq \sqrt{A_i}.\end{cases}
\end{equation}
Next we note that there exists $0<c_1<1$ such that for $i\geq 1$
\begin{align}\label{eqn:denominator}
\E\left[\frac{1}{\sqrt{A_{i+1}}}\,|\,R_i, A_i,R_{i+1}\right]%\leq \sum_{y=1}^\infty \frac{A_i}{\sqrt{A_i+y}(A_i+y-1)(A_i+y)}
&\leq \mathbb P(A_{i+1}\geq 2A_i \,|\, R_i,A_i,R_{i+1})\frac{1}{\sqrt{2A_i}}+\mathbb P(A_{i+1}< 2A_i \,|\, R_i,A_i,R_{i+1})\frac{1}{\sqrt{A_i}}\nonumber\\
&\leq \left(\frac{1}{2\sqrt{2}}+\frac{1}{2}\right)\frac{1}{\sqrt{A_i}}\nonumber\\ 
&\leq \frac{c_1}{\sqrt{A_i}}, \end{align}
where the first inequality uses $A_{i+1}>A_i$, see \eqref{eqn:ar>}, and the second inequality uses $\mathbb P(A_{i+1}\geq 2A_i\,|\, R_i,A_i,R_{i+1})\geq \frac{1}{2}$ which is a consequence of \eqref{eqn:atail}.  Then following \eqref{eqn:Rdecom}, \eqref{eqn:2bounds} and \eqref{eqn:denominator}, we obtain for $i$ large enough,
\begin{align*}
\E\left[\frac{R_{i+1}}{\sqrt{A_{i+1}}}\,|\,R_i,A_i\right]&=\E\left[\E\left[\frac{R_{i+1}}{\sqrt{A_{i+1}}}\,|\,R_i,A_i,R_{i+1}\right]\,|\,R_i,A_i\right]\\
&\leq c_1\E\left[\frac{R_{i+1}}{\sqrt{A_i}}\,|\,R_i, A_i\right]\\
&\leq \begin{cases}
c_1\frac{R_i}{\sqrt{A_i}}+c_1\frac{1}{\sqrt{A_i}}+c_1\frac{\sqrt{A_i}}{R_i},\quad \text{ if }R_i>\sqrt{A_i}\\
c_1\frac{R_i}{\sqrt{A_i}}+c_1\frac{1}{\sqrt{A_i}}+c_0c_1,\quad \text{ if }R_i\leq  \sqrt{A_i}\\
\end{cases}\\
&\leq \begin{cases}
c_1\frac{R_i}{\sqrt{A_i}}+2c_1,\quad \text{ if }R_i> \sqrt{A_i}\\
c_1\frac{R_i}{\sqrt{A_i}}+c_1(1+c_0),\quad \text{ if }R_i\leq  \sqrt{A_i}\\
\end{cases}\\
&\leq c_1\frac{R_i}{\sqrt{A_i}}+c_2
\end{align*}
where $c_2=\max\{2c_1, c_1(1+c_0)\}$, and for the third inequality we used \eqref{eqn:ar>}. Therefore, we obtain the following 
\begin{equation}\label{eqn:tightness}
\E\left[\frac{R_{i+1}}{\sqrt{A_{i+1}}}\right]\leq c_1\E\left[\frac{R_{i}}{\sqrt{A_{i}}}\right]+c_2, \quad \text{if $i$ is large enough.}
\end{equation}
Since $0<c_1<1$, the above display yields \eqref{eqn:bounded}. Then the proof is finished. 
\end{proof}

\subsection{Proof of Theorem \ref{cor:limit}}
We start with two propositions. Either of them proves a partial result of Theorem \ref{cor:limit} whose proof is provided at the end of this section. The first proposition is about the ergodicity of the Markov chain $(\xi_i)_{i\geq 0}$, introduced in \eqref{eqn:xi}. Let $P$ be the transition kernel of $(\xi_i)_{i\geq 0}.$ Introduce the following weighted supremum norm for a function $f:[0, \infty)\mapsto \R$
$$\|f\|=\sup_{x\geq 0}\frac{|f(x)|}{x+1}.$$
Let $C_b$ be the set of bounded continuous functions from $[0,\infty)$ to $\R$. For $t\geq 0$, let $\mathcal E_t$ be the law as follows:
\begin{equation}\label{eqn:et}
\mathcal E_t: = \mathcal L(X \, |\, X\geq t)=\mathcal L(t+X), \quad \text{ for } X\sim \text{Exp}(1), t\geq 0,
\end{equation}
In particular, $\mathcal E_0=\text{Exp}(1)$.

\begin{proposition}[Geometric ergodicity]
\label{prop1}
The Markov chain $(\xi_i)_{i\geq 0}$ admits a unique invariant measure $\mu=\text{Exp}(1)$. Moreover, there exists $C>0$ and $\rho\in(0,1)$ such that 
\begin{equation}\label{eqn:unique}\|P^nf-\mu(f)\|\leq C\rho^n\|f-\mu(f)\|,\quad \text{ for any } f\in C_b,\end{equation}
where $\mu(f)=\int_0^\infty f(x)\mu(\ddr x)$.
\end{proposition}
\begin{proof}
First of all, we verify that $\text{Exp}(1)$ is an invariant measure. Thanks to the construction \eqref{eqn:xi}, it suffices to show that, if $\xi_0\sim \text{Exp}(1)$, then $\xi_1\sim \text{Exp}(1)$. Recall $\xi_1=(\xi_{0}+X_1)\eta_1$ where  $\xi_0, X_1, \eta_1$ are independent and $X_1\sim \text{Exp}(1), \eta_1\sim \text{Uni(0,1)}$ (see \eqref{eqn:xi}). If we assume $\xi_0\sim \text{Exp}(1)$, then for any integer $k\geq 0$, we have 
\begin{align*}
\E[\xi_1^k]&=\E[(\xi_{0}+X_1)^k]\E[\eta_1^k]\\
&=\frac{1}{k+1}\E[(\xi_0+X_1)^k]\\
&=\frac{1}{k+1}\sum_{i=0}^k{k\choose i}\E[\xi_0^i]\E[X_1^{k-i}]=k!=\E[\xi_0^k].
\end{align*}
%Here we used the fact that $\xi_0$ and $X_1$ are i.i.d.\ with law Exp(1). 
Since the above display is true for any $k$, we conclude that $\xi_1\stackrel{d}{=}\xi_0\sim \text{Exp}(1)$. Here we used the method of moments, see \cite[Theorem 30.1]{billingsley1995probability}. Thus $\text{Exp}(1)$ is indeed an invariant measure of $(\xi_i)_{i\geq 0}$.

Note that \eqref{eqn:unique} implies that $\mu=\text{Exp}(1)$ is the unique invariant measure. For the rest, we will prove \eqref{eqn:unique} by applying Theorem 3.6 in \cite{hairer2010convergence} in our context. It suffices to verify the following condition which combines Assumption 3.1 and Remark 3.5 in \cite{hairer2010convergence}. 

\textit{Condition:  
(1) Let $V$ be the identity function: $V(x)=x, \forall x\geq 0$. Then $PV(x)=\frac{V(x)+1}{2}$ for any $x\geq 0.$  (2) Let $R>0.$ Then  for any $f$ with $\sup_{x\geq 0}|f(x)|\leq 1$, we have 
$$|Pf(x)-Pf(y)|\leq 2(1-e^{-R}), \quad\,\forall\,  0\leq x\leq y\leq  R.$$}

The verification of the above condition is as follows. Note that 
$$PV(x)=\E[V(\xi_1)\,|\,\xi_0=x]=\E[\xi_{0}+X_1]\E[\eta_1]=\frac{x+1}{2}=\frac{V(x)+1}{2}.$$
Then the first statement is proved.  For the second statement, recall that $\xi_0, X_1,\eta_1$ are independent. Then we observe that
\begin{align*}
Pf(x)&=\E[f(\xi_1 \,|\, \xi_0=x)]\\
&=\E[f((\xi_{0}+X_1)\eta_1)\,|\,\xi_0=x]\\
&=\E[f((\xi_{0}+X_1)\eta_1){\bold 1}_{X_1\geq y-x}\,|\,\xi_0=x]+\E[f((\xi_{0}+X_1)\eta_1){\bold 1}_{X_1< y-x}\,|\,\xi_0=x]\\
&=\mathbb P(X_1\geq y-x\,|\,\xi_0=x)\E[f((\xi_0+X_1)\eta_1)\,|\,X_1\geq y-x, \xi_0=x]\\
&\quad \quad+\E[f((\xi_{0}+X_1)\eta_1){\bold 1}_{X_1< y-x}\,|\,\xi_0=x]\\
&=\mathbb P(X_1\geq y-x)Pf(y)+\E[f((\xi_{0}+X_1)\eta_1){\bold 1}_{X_1< y-x}\,|\,\xi_0=x].\end{align*}
Here for the last equality we used that $\mathcal L(X_1\,|\,X_1\geq y-x)=\mathcal L(X_1+y-x)$ since $X_1\sim \text{Exp}(1)$, see \eqref{eqn:et}. Then since $|f(\cdot)|\leq 1$ and $0\leq x\leq y\leq R$, we obtain 
\begin{align*}
|Pf(x)-Pf(y)|&\leq 2\mathbb P(X_1<y-x)\leq 2(1-e^{-R}). \end{align*}
%where the equality uses the independence between $X_1$ and $\xi_0$. 
Thus the condition holds true and the proof is finished. 
\end{proof}

The second proposition is to prove the one dimensional convergence of $\left(\frac{R_i^2}{A_i}\right)$.   For preparation, the following lemma is needed. 
\begin{lemma}\label{lem:a/a}
For any $c\geq 0$, we have  $\mathcal L\left(\frac{A_{i}}{A_{i+1}}\,|\, R_i, R_{i+1}=\lfloor  cA_i\rfloor \right)=\mathcal L\left(\frac{A_{i}}{A_{i+1}}\,|\,  R_{i+1}=\lfloor  cA_i\rfloor\right)$. Moreover,  for any $C>0, f\in C_b,$
\[\sup_{0\leq c\leq C}\left|\E\left[f\left(\frac{A_{i}}{A_{i+1}}\right)\,|\, R_{i+1}=\lfloor \sqrt{cA_i}\rfloor\right]-\E[f(\eta)]\right|\xrightarrow[]{i\to\infty}0, \]
where $\eta\sim \text{Uni}(0,1)$. 
\end{lemma}
\begin{proof}Note that here we allow $c=0$ which means $R_i=0$. For Kingman's coalescent, $R_i$ can never be $0,$ but the transition probabilities \eqref{eqn:ri+1} and \eqref{eqn:ai+1} do allow the more general case with $R_i=0.$ 

The lemma is a direct consequence of \eqref{eqn:atail} or \eqref{eqn:ai+1}  and the fact that $A_i\to\infty$ as $i\to\infty$.  We omit details of the proof. 
\end{proof}
\begin{proposition}
\label{prop2}
The law of $\frac{R_i^2}{A_i}$ converges weakly to \text{Exp}(1) as $i\to\infty$. 
\end{proposition}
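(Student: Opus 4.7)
My plan is to reduce Proposition \ref{prop2} to Proposition \ref{prop1} by showing that, as $A_i \to \infty$, the one-step transition of $X_i := R_i^2/A_i$ converges to the transition kernel $P$ of the chain $(\xi_i)_{i\geq 0}$ from \eqref{eqn:xi}. Combined with the tightness of $(X_i)$ from Lemma \ref{lem:tight} and the geometric ergodicity from Proposition \ref{prop1}, this yields $\mathcal L(X_i) \xrightarrow[]{w} \text{Exp}(1)$. Note that $A_i \to \infty$ deterministically since $A_{i+1} \geq A_i + 1$, which is what will be exploited to apply the one-step limit.

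For the one-step convergence I analyse the two moves separately. Given $(R_i, A_i)$ with $R_i^2/A_i \to s$, comparing \eqref{eqn:ri+1} with Lemma \ref{lem:forr} identifies the law of $R_{i+1}$ with that of the variable $W_{A_i}$ of the lemma conditioned on $\{W_{A_i} \geq R_i + 1\}$. Since $W_{A_i}^2/A_i \xrightarrow[]{w} \text{Exp}(1)$ (a continuous limit) and $(R_i+1)^2/A_i \to s$, the memoryless property yields
\[
\mathcal L\!\left(\frac{R_{i+1}^2}{A_i}\,\Big|\,R_i, A_i\right) \Longrightarrow s + \text{Exp}(1) = \mathcal E_s.
\]
Next, from the exact tail \eqref{eqn:atail} with $y = \lceil A_i(e^t - 1) \rceil$,
\[
\P\!\left(\log(A_{i+1}/A_i) \geq t \,\Big|\, R_i, A_i, R_{i+1}\right) = \frac{A_i + R_{i+1}}{A_i + R_{i+1} + y - 1} \longrightarrow e^{-t},
\]
since $R_{i+1}/A_i \leq \sqrt{R_{i+1}^2/A_i}\,/\sqrt{A_i} \to 0$ by the previous step. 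Hence $\log(A_{i+1}/A_i) \xrightarrow[]{w} \text{Exp}(1)$, asymptotically independent of $R_{i+1}$. Writing $X_{i+1} = (R_{i+1}^2/A_i)\,e^{-\log(A_{i+1}/A_i)}$ now matches exactly the definition of $\xi_1$ given $\xi_0 = s$ in \eqref{eqn:xi}.

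To finish, fix $f \in C_b([0,\infty))$ and $\epsilon > 0$. By Lemma \ref{lem:tight} pick $M > 0$ with $\sup_i \P(X_i > M) < \epsilon$. Proposition \ref{prop1} supplies $k \geq 1$ with $\sup_{x \in [0,M]} |P^k f(x) - \mu(f)| \leq \epsilon$, where $\mu = \text{Exp}(1)$. Iterating the one-step convergence $k$ times (using $A_{i+j} \geq A_i + j \to \infty$ for each $0 \leq j \leq k$), the $k$-step conditional expectation $\E[f(X_{i+k}) \mid R_i, A_i]$ converges to $P^k f(R_i^2/A_i)$ uniformly on $\{R_i^2/A_i \leq M\}$. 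Splitting on $\{X_i \leq M\}$ and its complement gives $|\E[f(X_{i+k})] - \mu(f)| \leq 2\epsilon + 4\|f\|_\infty \,\epsilon$, so $X_i \xrightarrow[]{w} \text{Exp}(1)$.

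The main obstacle is upgrading the local limit theorem of Lemma \ref{lem:forr}, which is stated on a fixed compact $[0,t]$, into convergence of the conditional tail probabilities $\P(W_{A_i}^2/A_i \geq u \mid W_{A_i} \geq R_i + 1)$ at the random threshold $R_i + 1$; this will require summing the local estimates \eqref{eqn:local} uniformly across the relevant range and combining them with the asymptotics of the normalising ratio in \eqref{eqn:rtail} via Stirling's formula.
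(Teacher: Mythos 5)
Your proposal follows essentially the same route as the paper's proof: identify the conditional law of $R_{i+1}$ with the variable of Lemma \ref{lem:forr} conditioned on exceeding $R_i$, obtain the exponential limit of $\ln(A_{i+1}/A_i)$ from \eqref{eqn:ai+1}--\eqref{eqn:atail}, and combine the resulting one-step kernel convergence with the tightness of Lemma \ref{lem:tight} and the ergodicity of Proposition \ref{prop1}. The ``obstacle'' you flag at the end is not a substantive gap: the weak-convergence consequence of Lemma \ref{lem:forr} gives a continuous limit law, so conditional tail probabilities at thresholds converging to $s$ converge to those of $\mathcal E_s$, which is exactly how the paper passes from \eqref{eqn:local} to \eqref{eqn:p1}.
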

\begin{proof}
We claim that it suffices to prove that for any $t\geq 0$ and any $f\in C_b$, 
\begin{equation}\label{eqn:uniform}
\sup_{0\leq s\leq t}\left|\E\left[f\left(\frac{R^2_{i+1}}{A_{i+1}}\right)\,|\,R_i=\lfloor \sqrt{s A_i}\rfloor\right]-Pf(s)\right|\xrightarrow[]{i\to\infty}0.\end{equation}
Indeed, if \eqref{eqn:uniform} is true, then we have that for any $\epsilon>0$ there exists $T>0$ such that 
\begin{equation}\label{eqn:T} \inf_{0\leq s\leq t}\mathbb P\left(0\leq \frac{R_{i+1}^2}{A_{i+1}}\leq T\,|\, R_i=\lfloor \sqrt{sA_i}\rfloor \right)\geq 1-\epsilon,\quad i \text{ large enough}.\end{equation}
%Note that for $s$ in the interval $[0,t]$,  the convergence in \eqref{eqn:uniform} is uniform, and $Pf(s)$ is continuous and thus uniformly continuous. Therefore, 
Then as $i\to\infty$, uniformly for $s\in[0,t]$,  we have \begin{align*} \E\left[f\left(\frac{R^2_{i+2}}{A_{i+2}}\right)\,|\,R_i=\lfloor \sqrt{s A_i}\rfloor\right]&= \E\left[\E\left[f\left(\frac{R^2_{i+2}}{A_{i+2}}\right)\,|\, \frac{R_{i+1}^2}{A_{i+1}} \right] \,|\, R_i=\lfloor \sqrt{s A_i}\rfloor\right]\\
%&\approx \E\left[Pf(\bar s) \,|\, R_i=\lfloor \sqrt{s A_i}\rfloor\right] 
&= \E\left[Pf\left(\frac{R_{i+1}^2}{A_{i+1}}\right) \,|\, R_i=\lfloor \sqrt{s A_i}\rfloor\right] +o(1)
\\
&= P^2f(s)+o(1),
\end{align*}
where the first equivalence is due to \eqref{eqn:T} and \eqref{eqn:uniform} and $f\in C_b, Pf\in C_b;$ the second equivalence is due to \eqref{eqn:uniform} and $Pf\in C_b.$ Therefore we obtain
\[\sup_{0\leq s\leq t}\left|\E\left[f\left(\frac{R^2_{i+2}}{A_{i+2}}\right)\,|\,R_i=\lfloor \sqrt{s A_i}\rfloor\right]-P^2f(s)\right|\xrightarrow[]{i\to\infty}0.\]
Iterating this procedure, it follows that for any $n\geq 1,$
\[\sup_{0\leq s\leq t}\left|\E\left[f\left(\frac{R^2_{i+n}}{A_{i+n}}\right)\,|\,R_i=\lfloor \sqrt{s A_i}\rfloor\right]-P^nf(s)\right|\xrightarrow[]{i\to\infty}0,\]
which implies that $\mathcal L(\frac{R^2_{i+n}}{A_{i+n}}\,|\,R_i=\lfloor \sqrt{s A_i}\rfloor)$ converges weakly to $\mathcal L(\xi_n\,|\, \xi_0=s
)$ for any $s\geq 0$ as $i\to\infty$.   By  Proposition \ref{prop1}, $\mathcal L(\xi_n\,|\, \xi_0=s
)$ converges weakly to \text{Exp}(1) as $n\to\infty$. Finally we apply Lemma \ref{lem:tight} to conclude  that this proposition holds true. This reasoning also leads to $\left(\frac{R_i^2}{A_i},\frac{R_{i+1}^2}{A_{i+1}}\cdots, \frac{R_{i+k}^2}{A_{i+k}}\right)\xrightarrow[i\to\infty]{d}(\xi_0,\xi_1,\cdots,\xi_k)$ for any positive integer $k$ if $\xi_0\sim\text{Exp}(1)$. But we are content with the one dimensional convergence for this proposition and the multidimensional convergence will be proved in the proof of Theorem \ref{cor:limit} more straightforwardly.  Another way to prove this proposition using \eqref{eqn:uniform} is to apply \cite[Theorem (1)]{karr1975weak}. The only problem is that $\left(\frac{R_i^2}{A_i}\right)$ is not a Markov chain. It suffices to enlarge it into  $\left(\frac{R_i^2}{A_i}, \frac{1}{A_i}\right)$. We omit the detailed steps.

To prove \eqref{eqn:uniform}, we first recall the random variable $W=W_n$ in Lemma \ref{lem:forr}. For $0\leq  k\leq n$, \eqref{eqn:ri+1} yields
$$\mathcal L(W\, | \, W> k)=\mathcal L(R_{i+1}\,|\, R_i=k, A_i=n).$$
Then using \eqref{eqn:local}, we obtain 
\begin{equation}\label{eqn:p1}\sup_{0\leq s\leq t}\left|\E\left[f\left(\frac{R^2_{i+1}}{A_{i}}\right)\,|\,R_i=\lfloor \sqrt{s A_i}\rfloor\right]-\E[f(Z_s)]\right|\xrightarrow[]{i\to\infty}0, \end{equation}
where $Z_s\sim \mathcal E_s$ (see \eqref{eqn:et}).
As a consequence, similar to \eqref{eqn:T},  for any $\epsilon>0$, there exists $C>0$ such that 
\begin{equation}\label{eqn:r<a}\inf_{0\leq s\leq t}\mathbb P(0\leq R_{i+1}^2\leq C A_i\,|\, R_i=\lfloor \sqrt{s A_i}\rfloor)\geq 1-\epsilon,\quad i \text{ large enough}.\end{equation}
Then using Lemma \ref{lem:a/a}, we have
\begin{equation}\label{eqn:p2}\sup_{0
\leq s\leq t, s\leq c\leq C}\left|\E\left[f\left(\frac{A_{i}}{A_{i+1}}\right)\,|\,R_i=\lfloor \sqrt{s A_i}\rfloor,  R_{i+1}=\lfloor \sqrt{cA_i}\rfloor\right]-\E[f(\eta)]\right|\xrightarrow[]{i\to\infty}0. \end{equation}
where $\eta\sim\text{Uni}(0,1)$.  Note that $Pf(s)=\E[f(Z_s\eta)]$ if we assume $\eta$ is independent of $Z_s$. Moreover, 
$\frac{R^2_{i+1}}{A_{i+1}}=\frac{R^2_{i+1}}{A_{i}}\frac{A_i}{A_{i+1}}.$
Then using the above three displays, we conclude that \eqref{eqn:uniform} holds true and thus the proof for the proposition is finished.
\end{proof}

\begin{proof}[Proof of Theorem \ref{cor:limit}]
By Proposition \ref{prop1}, $(\xi_i)_{i\geq 0}$ is a stationary Markov chain if $\xi_0\sim \text{Exp}(1).$ Using the definition \eqref{eqn:xi}, we conclude that $\mathcal W=(\xi_i, \eta_i)_{i\geq 1}$ is a stationary Markov chain. Next we show \eqref{density}. Using \eqref{eqn:xi}, we have 
\begin{align*}\mathbb P(\xi_1\leq s, \eta_1\leq t )&=\mathbb P((\xi_0+X_1) \eta_1\leq s, \eta_1\leq t )=\int_0^t  \gamma\left(2,\frac{s}{u}\right)\ddr u,&
\end{align*}
where $\gamma(\cdot,\cdot)$ is the lower incomplete gamma function. Here we used that $\xi_0, X_1, \eta_1$ are independent and $\xi_0\stackrel{d}{=}X_1\sim \text{Exp(1)}$ and $\eta_1\sim \text{Uni}(0,1)$, and such that $\xi_0+X_1$ follows the Gamma distribution with shape parameter $2$ and scale parameter $1$. Taking the partial derivative with respect to $s$ and $t$ yields \eqref{density}.  It remains to prove $\mathcal W^{(n)}\Longrightarrow \mathcal W$ as $n\to\infty.$

Let $\xi_0\sim \text{Exp}(1)$. Using Proposition \ref{prop2} and \eqref{eqn:p1}, we have 
\begin{equation}\label{eqn:2}\left(\frac{R^2_{n}}{A_n},\frac{R^2_{n+1}}{A_n}\right)\xrightarrow[n\to\infty]{d} (\xi_0, \xi_{0}+X_1).\end{equation}
Together with Lemma \ref{lem:a/a} and the fact that $\left(\frac{R_i^2}{A_i}\right)_{i\geq 1}$ is tight, we obtain further that 
\begin{equation}\label{eqn:ind}\left(\frac{R^2_{n}}{A_n},\frac{R^2_{n+1}}{A_n}, \frac{A_{n}}{A_{n+1}},  \frac{A_{n+1}}{A_{n+2}},\cdots\right)\stackrel{n\to\infty}{\Longrightarrow} (\xi_0, \xi_{0}+X_1, \eta_1, \eta_2,\cdots).\end{equation}
Note that 
$$\frac{R^2_{i+1}}{A_{i+1}}=\frac{R^2_{i+1}}{A_{i}}\frac{A_i}{A_{i+1}},\quad \forall i\geq 1.$$
The above two displays entail 
$$\left(\frac{R^2_{n}}{A_n},\frac{R^2_{n+1}}{A_n},\frac{R^2_{n+1}}{A_{n+1}}, \frac{A_{n}}{A_{n+1}},  \frac{A_{n+1}}{A_{n+2}}, \cdots\right)\stackrel{n\to\infty}{\Longrightarrow} (\xi_0, \xi_{0}+X_1,\xi_1, \eta_1, \eta_2,\cdots).$$
Since the asymptotic behaviour of $\frac{R^2_{i+1}}{A_{i}}$ only depends on that of $\frac{R^2_{i}}{A_{i}}$ (see \eqref{eqn:2} and \eqref{eqn:ind}), we have
$$\left(\frac{R^2_{n}}{A_n},\frac{R^2_{n+1}}{A_n},\frac{R^2_{n+1}}{A_{n+1}}, \frac{R^2_{n+2}}{A_{n+1}},\frac{A_{n}}{A_{n+1}},  \frac{A_{n+1}}{A_{n+2}},\cdots\right)\stackrel{n\to\infty}{\Longrightarrow} (\xi_0, \xi_{0}+X_1,\xi_1, \xi_{1}+X_2, \eta_1, \eta_2,\cdots). $$
%, \quad \text{ as $n\to\infty$}.$$
Iterating this procedure, we obtain that for any $k\geq 0,$
\begin{align*}&\left(\frac{R^2_{n}}{A_n},\frac{R^2_{n+1}}{A_n},\cdots,\frac{R^2_{n+k}}{A_{n+k}},\frac{R^2_{n+k+1}}{A_{n+k}}, \frac{A_{n}}{A_{n+1}},  \frac{A_{n+1}}{A_{n+2}},\cdots\right)\\
&\stackrel{n\to\infty}{\Longrightarrow} (\xi_0, \xi_{0}+X_1,\cdots, \xi_k, \xi_{k}+X_{k+1},  \eta_1, \eta_2,\cdots). %, \quad \text{ as $n\to\infty$}
\end{align*}
which implies $\mathcal W^{(n)}\stackrel{n\to\infty}{\Longrightarrow} \mathcal W$. Then the proof of Theorem \ref{cor:limit} is finished. 
\end{proof}

\section*{Acknowledgements}
The author thanks Matthias Birkner for suggesting the name {\it provisional external branch length sequence} for $(L_n)$, and Cl\'ement Foucart and Huili Liu for helpful comments on a draft version. The author thanks an
anonymous referee for carefully reading the manuscript and for pointing out several mistakes and giving insightful and constructive comments. 
\bibliographystyle{alea3}
\bibliography{myref}

\end{document}